% ----------------------------------------------------------------
% AMS-LaTeX Paper ************************************************
% **** -----------------------------------------------------------
\documentclass{amsart}
\usepackage{graphicx}

% ----------------------------------------------------------------
\vfuzz2pt % Don't report over-full v-boxes if over-edge is small
\hfuzz2pt % Don't report over-full h-boxes if over-edge is small
% THEOREMS -------------------------------------------------------
\newtheorem{thm}{Theorem}[section]
\newtheorem{exm}[thm]{Example}

\newtheorem{lem}[thm]{Lemma}

\theoremstyle{definition}
\newtheorem{defn}[thm]{Definition}
\theoremstyle{remark}
\newtheorem{rem}[thm]{Remark}
\numberwithin{equation}{section}
% MATH -----------------------------------------------------------

\newcommand{\Real}{\mathbb R}

% ----------------------------------------------------------------
\begin{document}

\title{The fractional d'Alembert's formulas}
\author{Cheng-Gang Li}
\address{Department of Mathematics, Southwest Jiaotong  University, Chengdu 611756, P.R.China.}
\email{lichenggang@swjtu.edu.cn}
\author{Miao Li}
\address{Department of Mathematics, Sichuan University}
\email{mli@scu.edu.cn}
\author{Sergey Piskarev}
\address{Science Research Computer Center, Lomonosov Moscow State University}
\email{piskarev@gmail.com}
\author{Mark {M. Meerschaert} }
\address{Department of Statistics and probability, Michigan State University}
\email{mcubed@stt.msu.edu}
\subjclass{Primary 45K05, 45N05, 35R11, 60G52; Secondary 26A33, 35L05, 60G22}
\thanks{The first author is supported by the Fundamental Research Funds for the Central Universities
(No. A0920502051820-62), the second author is supported by the NSFC-RFBR Programme of China (No. 11611530677), and
the third author was partially supported  by grants of Russian Foundation for  Basic Research  $15-01-00026\_a, 16-01-00039\_a$,
 $17-51-53008{\_}a$. {The fourth author was partially supported by US ARO grant W911NF-15-1-0562 and US NSF grants EAR-1344280 and DMS-1462156.}}
\keywords{d'Alembert's formula, fractional d'Alembert's formula, wave equation, fractional integro-differential equation, fractional Cauchy problem, fractional resolvent family, inverse stable subordinator}
\date{}

\maketitle
\begin{abstract}
In this paper we develop generalized d'Alembert's formulas for abstract fractional integro-differential equations and fractional differential equations
on Banach spaces.
Some examples are given to illustrate our abstract results, and the probability interpretation of these fractional d'Alembert's formulas are also given. Moreover, we also provide
d'Alembert's formulas for abstract fractional telegraph equations.
\end{abstract}

\section{Introduction}

It is well-known that the solution of traditional wave equation on the line
\begin{equation}\label{wave-equation}
\begin{cases}
u_{tt}(t,x) = u_{xx}(t,x), \quad t>0, \,x \in \Real\\
u(0,x)= \phi(x), \quad u_t(0,x)= \psi(x)
\end{cases}
\end{equation}
is given by d'Alembert's formula
\begin{equation}\label{dAlmbert}
u(t,x) = \frac{1}{2}[\phi(x+t) + \phi(x-t)] + \frac{1}{2}\int_{x-t}^{x+t} \psi(y)dy.
\end{equation}

Including a forcing function, the solution of the wave equation on the line
\begin{equation}\label{wave1}
\begin{cases}
w_{tt}(t,x) = w_{xx}(t,x) + f(t,x) , \quad t>0, \,x \in \Real \\
w(0,x)= 0, \quad w_t(0,x)= 0
\end{cases}
\end{equation}
is given by the Duhamel's principle formula
$$ w(t,x) = \int_0^t r(t,x,\tau) d \tau , $$
where $ r(t,x,\tau )$ is the solution of wave equation
\begin{equation}\label{wave2}
\begin{cases}
r_{tt}(t,x,\tau) = r_{xx}(t,x,\tau), \quad t>0, \,x \in \Real \\
r(\tau,x,\tau)= 0, \quad r_t(\tau,x, \tau)= f(\tau,x)
\end{cases}
\end{equation}
The fractional   Duhamel's principle formula was obtained by \cite{forcing,Umarov}.

It is also of interest to know a fractional version of the d'Alembert formula. Next we rewrite (\ref{wave-equation}) as an integral equation,
which is more {easily} generalized to the fractional case. By integrating the wave equation (\ref{wave-equation}) twice with respect to $t$, we get the following integro-differential equation
$$
u(t,x)= \phi(x) + t\psi(x) + \int_0^t(t-s)u_{xx}(s,x)ds.
$$
One possible generalization of the above equation to fractional order
$1\le \alpha \le 2$ is
\begin{equation}\label{Fujitaequation}
u(t,x)= \phi(x) + \frac{t^{\alpha/2}}{\Gamma(1+\frac{\alpha}{2})}\psi(x) + \frac{1}{\Gamma(\alpha)}\int_0^t(t-s)^{\alpha-1}u_{xx}(s,x)ds.
\end{equation}
Fujita studied the above equation in \cite{Fujita} and showed that the unique solution is given by
\begin{equation}\label{Fujitaformula}
u(t,x) = \frac{1}{2}{\bf E}[\phi(x+ Y_{\alpha/2}(t)) + \phi(x-Y_{\alpha/2}(t))] + \frac{1}{2}{\bf E} \int_{x-Y_{\alpha/2}(t)}^{x+Y_{\alpha/2}(t)} \psi(y)dy,
\end{equation}
where $Y_{\alpha/2}(t)=\sup_{0\leq s\leq t}X_\alpha (s)$, and $X_\alpha (t)$ $(1\leq\alpha\leq2)$ is a $c\grave{a}dl\grave{a}g$ stable process with
characteristic function ${\bf E}\exp\{i\xi X_\alpha (t)\}= \exp\{-t|\xi|^{2/\alpha}e^{-(\pi i/2)(2-2/\alpha){\rm sgn}(\xi)}\}$.
$Y_{\alpha/2}(t)$ can also be regarded as the inverse of an $\alpha/2$ stable subordinator \cite{meerschaert, meerschaert2013}.
When $\alpha=2$, the expression (\ref{Fujitaformula}) reduces to d'Alembert's formula (\ref{dAlmbert}). Fujita also mentioned that ${\bf E}[\phi(x \pm Y_{\alpha/2}(t))\pm \int_{0}^{x\pm Y_{\alpha/2}(t)} \psi(y)dy]$
%and ${\bf E}[\phi(x- Y_{\alpha/2}(t))- \int_{0}^{x-Y_{\alpha/2}(t)} \psi(y)dy]$ are
are solutions for
\begin{equation}\label{Fujitaequation1}
u^\pm(t,x)= \phi(x) \pm \int_0^x \psi(y)dy \pm \frac{1}{\Gamma(\alpha/2)}\int_0^t(t-s)^{\alpha/2-1}u^{\pm}_{x}(s,x)ds,
\end{equation}
respectively, and the solution of (\ref{Fujitaequation}) can be decomposed as
$
u(t,x) = \frac{1}{2}(u^+(t,x) + u^-(t,x)).
$

Next we convert the integro-differential equation (\ref{Fujitaequation}) to a fractional differential equation.  See Section 2 for the definition of fractional derivatives, fractional integrals, and the special functions $g_\alpha(t)$. We refer to \cite{Kilbas, meerschaert2012book, Podlubny, Samko}
for more details on fractional derivatives and fractional differential equations. Now if $u$ satisfies the equation (\ref{Fujitaequation}), then by differentiating it with respect to $t$ for $\alpha/2$-times in the sense of Caputo fractional derivatives and  by using the identity $D_t^\alpha 1=0$, we have
\begin{eqnarray*}
D_t^{\alpha/2} u(t,x) &=& D_t^{\alpha/2}(u(t,x)- \phi(x))\\
&=& D_t^{\alpha/2}(g_{1+{\alpha/2}}(t)\psi(x) + (J_t^\alpha u_{xx})(t,x))\\
&=& \psi(x) + (J_t^{\alpha/2}u_{xx})(t,x),
\end{eqnarray*}
and next differentiating for $\alpha/2$-times again we get
\begin{eqnarray*}
D_t^{\alpha/2} (D_t^{\alpha/2} u(t,x)) &=& D_t^{\alpha/2} (\psi(x)+(J_t^{\alpha/2}u_{xx})(t,x))\\
&=&D_t^{\alpha/2}(J_t^{\alpha/2}u_{xx})(t,x)) = u_{xx}(t,x).
\end{eqnarray*}
This suggests an $\alpha$-order differential equation $D_t^{\alpha/2} D_t^{\alpha/2} u = u_{xx}$.
It is also interesting to consider the integro-differential equation $D_t^{\alpha} u = u_{xx}$, because
$D_t^{\alpha} = D_t^{\alpha/2}D_t^{\alpha/2}$ holds only under some special conditions.
%the problem is that it may happen that $D_t^{\alpha/2}D_t^{\alpha/2}u\not= D_t^{\alpha}u$ if $\alpha \not=2$.

Motivated by the above observations, we will first consider d'Alembert's formula for abstract fractional integro-differential equation in the form of
\begin{equation} \label{0alpha}
u(t)= \phi + \frac{t^{\alpha/2}}{\Gamma(1+\alpha/2)}\psi + \frac{1}{\Gamma(\alpha)}\int_0^t(t-s)^{\alpha-1}A^2 u(s)ds, \quad t >0
\end{equation}
on a Banach space. It is known that the well-posedness of equation (\ref{0alpha}) is equivalent to the existence of an $\alpha$-times resolvent family for $A^2$.
Thus the theory of fractional resolvent families will be our main tool. The notion of resolvent families was first introduced by Pr\"uss \cite{Pruss} to study Volterra integral equations, and then developed systematically by Bajlekova \cite{Baj} for fractional Cauchy problems. The fractional resolvent families can be considered as generalizations of $C_0$-semigroups and cosine operator functions \cite{ABHN, Engel1999, Pazy}.
The d'Alembert formula for wave equation is in fact the decomposition of a cosine operator function, see for example \cite[Chapter III]{Krein}. For its fractional analogue
 we will use the decomposition theorem for fractional resolvent families \cite{LZ},
%The most important fact we will use is the decomposition theorem for fractional resolvent families \cite{LZ},
i.e. our Lemma \ref{decom}. Thanks to this lemma, we are able to give the solution of (\ref{0alpha}) in Theorem \ref{abFujita} and decompose the solution as $u = \frac{1}{2}(u^++ u^-)$, where $u^\pm$ are solutions to
\begin{equation}\label{Fujitaequation2}
u^\pm(t,x)= \phi(x) \pm A^{-1}\psi \pm \frac{1}{\Gamma(\alpha/2)}\int_0^t(t-s)^{\alpha/2-1}Au^{\pm}(s,x)ds.
\end{equation}
respectively, when $\psi$ is in the range of the operator $A$. When $A = \frac{\partial}{\partial x}$, then (\ref{Fujitaequation2}) is the same as (\ref{Fujitaequation1}). The corresponding fractional differential equations for (\ref{0alpha}) and (\ref{Fujitaequation2}) are
\begin{equation*}\label{0alphaf}
\begin{cases}
D_t^{\alpha/2} D_t^{\alpha/2} u(t) = A^2 u(t), \quad t>0\\
u(0)= \phi, \quad D_t^{\alpha/2}u(0)= \psi
\end{cases}
\end{equation*}
and
\begin{equation*}\label{0alphaf}
\begin{cases}
D_t^{\alpha/2} u^\pm(t) = \pm A u^\pm (t), \quad t>0\\
u^\pm(0)= \phi \pm A^{-1} \psi
\end{cases}
\end{equation*}
respectively.

In Theorems \ref{thmab} and \ref{afie} we will construct the d'Alembert formula for the more general equation
\begin{equation*} \label{0general betaA}
u(t)= \phi + \frac{t^{\beta}}{\Gamma(1+\beta)}\psi + \frac{1}{\Gamma(\alpha)}\int_0^t(t-s)^{\alpha-1}A^2 u(s)ds, \quad t >0
\end{equation*}
or its corresponding fractional differential equation
\begin{equation*}\label{0two-equation2}
\begin{cases}
D_t^{\alpha- \beta} D_t^{\beta} u(t) = A^2 u(t), \quad t>0\\
u(0)= \phi, \quad D_t^{\beta}u(0)= \psi.
\end{cases}
\end{equation*}
In particular, when $\beta= \alpha/2$, an alternative  d'Alembert formula for (\ref{0alpha}) will be provided. More precisely, the solution of (\ref{0alpha})
%It is remarkable that the operator $D_t^{\alpha/2} D_t^{\alpha/2}-A^2$ can be decomposed into the product of two operators $D_t^{\alpha/2}-A$ and $D_t^{\alpha/2}+A$, therefore we are
 can be decomposed as $u(t) = \frac{1}{2}(v^+(t) + v^-(t))$, where $v^\pm$ are solutions to
\begin{equation*}
v^\pm(t)= \phi + \frac{t^{\alpha/2}}{\Gamma(1+\alpha/2)}\psi \pm \frac{1}{\Gamma(\alpha/2)}\int_0^t(t-s)^{\alpha/2-1}A v^\pm(s)ds, \quad t >0,
\end{equation*}
and the corresponding fractional differential equations are
\begin{equation*}
\begin{cases}
D_t^{\alpha/2}  v^\pm(t) = \pm A v^\pm(t) + \psi, \quad t>0\\
v^\pm(0)= \phi.
\end{cases}
\end{equation*}
And the d'Alembert formula for fractional differential equation like
\begin{equation*}
\begin{cases}
D_t^{\alpha} u(t) = A^2 u(t), \quad t>0\\
u(0)= \phi, \quad u_t(0)= \psi
\end{cases}
\end{equation*}
will also be considered.

Our papers is organized as follows: in Section 2 we will recall some results on fractional resolvent families and then derive the fractional d'Alembert's formula for
abstract fractional integro-differential equations and fractional differential equations on Banach spaces; some concrete examples are given in Section 3 to illustrate our abstract results,
and their probability interpretations are also given; finally in Section 4 we will give the fractional  d'Alembert's formula for fractional telegraph equations.

\section{d'Alembert's formula for abstract fractional equations}

Let $X$ be a Banach space and $A$ be a closed linear densely defined operator on $X$. We begin with the definitions of fractional integrals and derivatives.

\begin{defn}
The Riemann-Liouville fractional integral of order $\alpha > 0$ is defined as
$$
J^\alpha_t f(t):= (g_\alpha\ast f)(t) = \int_0^t g_\alpha(t-s)f(s)ds,\quad\quad f\in L^1([0,+\infty);X),\quad t>0
$$
where
\begin{equation*}
g_\alpha(t)=\left\{
\begin{array}{ll}
\frac{ t^{\alpha-1}}{\Gamma(\alpha)},&t>0, \\
0,&t\leq 0.
\end{array}
\right.
\end{equation*}
Set moreover $J^0_t f(t)=f(t)$.
\end{defn}

%Next we turn to consider abstract fractional Cauchy problems.

\begin{defn}
The Riemann-Liouville fractional derivative of order $\alpha > 0$ is defined as
$$
{}_{RL}D^\alpha_t f(t):= D^m_tJ^{m-\alpha}_tf(t),\quad t \in (0,T)
$$
for $m-1<\alpha\leq m$, $m$ is an integer, $f\in L^1((0,T);X)$, and $g_{m-\alpha}\ast f\in W^{m,1}((0,T);X)$, where
\begin{eqnarray*}
W^{m,1}((0,T);X)&:=&\{ f| \mbox{ there exists } \phi \in L^1((0,T);X) \mbox{ such that }
\\
&&\quad\quad
f(t) = \sum_{k=0}^{m-1}c_k g_{k+1}(t)+ (g_m*\phi)(t),\, t \in (0,T)\}.
\end{eqnarray*}
The Caputo fractional derivative of order $\alpha > 0$ is defined as
$$
D^\alpha_t f(t):= J^{m-\alpha}_tD^m_tf(t), \quad t \in (0,T)
$$
if $f\in W^{m,1}((0,T);X)$. Moreover, we define
$$
{}_{RL}D^\alpha_t f(0):=\lim_{t \to 0} {}_{RL}D^\alpha_t f(t), \quad D^\alpha_t f(0):=\lim_{t \to 0} D^\alpha_t f(t)
$$
if the limits exist.
\end{defn}

%The relation between Riemann-Liouville fractional derivative and Caputo fractional derivative reads
%\begin{equation}
%D^\alpha_t f(t)= {}_{RL}D^\alpha_t \big(f(t)-\sum_{k=0}^{m-1}f^{(k)} (0) g_{k+1}(t) \big).
%\end{equation}

%Let $0< \alpha \le 2$.

Now we consider the Volterra equation
\begin{equation}\label{Volterra}
u(t) = f(t) + \int_0^t g_\alpha(t-s) Au(s)ds, \quad t \ge 0
\end{equation}
where $f(t)$ is a continuous $X$-valued function.

\begin{defn} Let $u(t) : \Real_+ \to X$ be continuous.

(1) $u(t)$ is called a strong solution of (\ref{Volterra}) if $u(t) \in D(A)$  and (\ref{Volterra}) holds for $t \ge 0$;

(2) $u(t)$ is called a mild solution of (\ref{Volterra}) if $(g_{\alpha} *u)(t) \in D(A)$ and $u(t) = f(t) + A(g_{\alpha} *u)(t) $ for $t \ge 0$.
\end{defn}

The solution family for (\ref{Volterra}) is defined by \cite{Baj, Pruss}.
\begin{defn}
A family $\{S_\alpha(t)\}_{t \ge 0}\subset B(X)$ is called an $\alpha$-times resolvent family for the operator $A$ (or generated by $A$) if the following conditions are satisfied:

$(1)$ $S_\alpha(t)x: \Real_+ \to X$ is continuous for every $x \in X$ and $S_\alpha(0)=I$;

$(2)$ $S_\alpha (t) D(A) \subset D(A)$ and $AS_\alpha(t) x = S_\alpha(t)Ax$ for all $x \in D(A)$ and $t \ge 0$;

$(3)$ the resolvent equation
\begin{equation}\label{A-resolventequation}
S_\alpha(t)x = x + (g_\alpha*S_\alpha)(t)Ax
\end{equation}
holds for every $x \in D(A)$.
\end{defn}

\begin{rem}\label{remfirst}
Since $A$ is closed and densely defined, it is easy to show that for all $x \in X$, $(g_\alpha*S_\alpha)(t)x \in D(A)$ and
\begin{equation}\label{X-resolventequation}
S_\alpha(t)x = x + A(g_\alpha*S_\alpha)(t)x.
\end{equation}
\end{rem}

It is shown in \cite{Pruss} that the Volterra equation (\ref{Volterra}) is well-posed if and only if the operator $A$ generates an $\alpha$-times resolvent family $S_\alpha(t)$, and the mild solution to (\ref{Volterra}) is
given by
\begin{equation}\label{Prussf}
u(t) = \frac{d}{dt} \int_0^t S_\alpha(t-s) f(s) ds.
\end{equation}
In particular, the mild solution to
\begin{equation}\label{ugalphaA}
u(t)= x + \int_0^t g_\alpha(t-s) Au(s)ds
\end{equation}
is given by $u(t)= S_\alpha(t)x$; in addition, if $x \in D(A)$, then $u(t)$ is also a strong solution. By differentiating (\ref{ugalphaA}) $\alpha$-times, we get a fractional equation of $\alpha$-order
\begin{equation}\label{alphaequation}
\begin{cases}
D_t^{\alpha}u(t) = Au(t), \quad t>0\\
u(0)= x\quad (u_t(0)= 0 \mbox{ if } 1<\alpha \le 2).
\end{cases}
\end{equation}
It is also known that the well-posedness of (\ref{alphaequation}) is equivalent to that of (\ref{Volterra}), and thus is equivalent to the existence of an $\alpha$-times resolvent family $S_\alpha(t)$ for $A$. In this case,
the unique mild solution of (\ref{alphaequation}) is also given by $u(t)= S_\alpha(t)x$,  For  details we refer to \cite{Baj}.\\

Now we recall the following result on the generation of fractional resolvent families, which is crucial for our decomposition theorem.

\begin{lem} \cite{LZ}\label{decom}
Let $0< \alpha \le 2$. Suppose that both $A$ and $-A$ generate $\alpha/2$-times resolvent families $S_{\alpha/2}^+(t)$ and $S_{\alpha/2}^-(t)$, respectively. Then $A^2$ generates an $\alpha$-times resolvent family
$S_\alpha(t)$, which is given by  $S_\alpha(t) = \frac{1}{2}[S_{\alpha/2}^+(t)+ S_{\alpha/2}^-(t)]$.
\end{lem}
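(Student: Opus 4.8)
The plan is to verify directly that the candidate family $S_\alpha(t) := \frac{1}{2}[S_{\alpha/2}^+(t) + S_{\alpha/2}^-(t)]$ satisfies the three defining properties of an $\alpha$-times resolvent family for $A^2$. Properties (1) and (2) are inherited almost immediately from the corresponding properties of $S_{\alpha/2}^\pm$: continuity is clear from the linearity of the definition, and the normalization reads $S_\alpha(0) = \frac{1}{2}(I+I) = I$. For the commutation property one notes that ``$-A$ generates $S_{\alpha/2}^-$'' means $AS_{\alpha/2}^-(t)x = S_{\alpha/2}^-(t)Ax$ on $D(A)$, the very same identity that ``$A$ generates $S_{\alpha/2}^+$'' yields for $S_{\alpha/2}^+$. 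Hence for $x \in D(A^2)$ I would apply each commutation twice to obtain $S_{\alpha/2}^\pm(t)D(A^2) \subset D(A^2)$ and $A^2 S_{\alpha/2}^\pm(t)x = S_{\alpha/2}^\pm(t)A^2 x$, and averaging gives property (2) for $S_\alpha$.

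The heart of the argument is the resolvent equation (3), namely $S_\alpha(t)x = x + (g_\alpha * S_\alpha)(t)A^2 x$ for $x \in D(A^2)$, and the idea is to iterate the first-order resolvent equations. Writing $S^\pm := S_{\alpha/2}^\pm$, relation \eqref{A-resolventequation} reads $S^+(s)y = y + (g_{\alpha/2} * S^+)(s)Ay$ for $y \in D(A)$, and $S^-(s)y = y - (g_{\alpha/2} * S^-)(s)Ay$. Fix $x \in D(A^2)$; then $Ax \in D(A)$, so I would first apply the resolvent equation to $x$ and then substitute the resolvent equation for $S^+(s)(Ax)$ inside the convolution integral. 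Collecting terms, the contribution linear in $A$ becomes $(g_{\alpha/2} * 1)(t)Ax = g_{1+\alpha/2}(t)Ax$, and the quadratic term becomes $(g_{\alpha/2} * g_{\alpha/2} * S^+)(t)A^2 x$. Using the semigroup identity $g_{\alpha/2} * g_{\alpha/2} = g_\alpha$ together with associativity of convolution, this yields $S^+(t)x = x + g_{1+\alpha/2}(t)Ax + (g_\alpha * S^+)(t)A^2 x$; the same computation for $S^-$ gives $S^-(t)x = x - g_{1+\alpha/2}(t)Ax + (g_\alpha * S^-)(t)A^2 x$. Averaging the two relations, the cross terms $\pm g_{1+\alpha/2}(t)Ax$ cancel and one is left precisely with $S_\alpha(t)x = x + (g_\alpha * S_\alpha)(t)A^2 x$, as desired.

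The main technical obstacle will be justifying the substitution step rigorously, rather than the final cancellation, which is purely algebraic. Specifically, interchanging $A$ with the convolution integral $\int_0^t g_{\alpha/2}(t-s)(\cdot)\,ds$ and substituting the resolvent equation for $S^+(s)Ax$ under the integral sign requires the closedness of $A$ together with the commutation property (2) established above; Remark \ref{remfirst} furnishes the version of the resolvent equation valid for all $x \in X$ that legitimizes these manipulations. Once the convolution identity $g_{\alpha/2} * g_{\alpha/2} = g_\alpha$ and Fubini/associativity for the iterated convolution are in hand, the computation closes. Finally, by the equivalence between well-posedness of the Volterra equation \eqref{Volterra} and existence of a resolvent family, the verification of (1)--(3) shows that $A^2$ generates $S_\alpha(t)$, completing the proof.
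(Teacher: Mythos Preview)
The paper does not actually prove Lemma~\ref{decom}; it is quoted from \cite{LZ} without argument. So there is no ``paper's own proof'' to compare against. That said, your proposal is correct, and the key computation you outline---iterating the resolvent equation for $S_{\alpha/2}^\pm$ on $x\in D(A^2)$ to obtain
\[
S_{\alpha/2}^\pm(t)x = x \pm g_{1+\alpha/2}(t)Ax + (g_\alpha * S_{\alpha/2}^\pm)(t)A^2 x
\]
and then averaging so that the $\pm g_{1+\alpha/2}(t)Ax$ terms cancel---appears verbatim later in the paper, in the proof of Theorem~\ref{afie}. So your approach is entirely in the spirit of the paper's own techniques.

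One small point you pass over: the definition of resolvent family in Section~2 presupposes that the generating operator is closed and densely defined, so strictly speaking you should check this for $A^2$. Density of $D(A^2)$ follows from the fact that $A$ has nonempty resolvent set (a consequence of generating an $\alpha/2$-times resolvent family). Closedness of $A^2$ follows because for $\lambda>0$ large enough both $\lambda^{\alpha/2}\in\rho(A)$ and $\lambda^{\alpha/2}\in\rho(-A)$, whence $\lambda^\alpha - A^2 = (\lambda^{\alpha/2}-A)(\lambda^{\alpha/2}+A)$ has bounded inverse, so $A^2$ has nonempty resolvent set and is therefore closed. This is routine, but worth a sentence.
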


\begin{rem}
(1) If $A$ generates a $C_0$-group then, by the subordination principle for fractional resolvent families \cite[Theorem 3.1]{Baj}, both $A$ and $-A$ generate $\alpha/2$-times resolvent families. Thus the generator of a
$C_0$-group satisfies the assumptions in the above lemma.

(2) Let $1< \alpha <2$. Suppose that there is some $\theta >0$ with $0 < \theta < \min\{\frac{\pi}{2}, \frac{\pi}{\alpha}- \frac{\pi}{2}\}$ such that
$$
\sigma (A) \subset \{z \in {\mathbb C}: \frac{\alpha}{2}(\frac{\pi}{2} + \theta)\le |\arg z| \le \pi - \frac{\alpha}{2}(\frac{\pi}{2} + \theta)\}=:\Gamma_{\alpha,\theta}
$$
and for every $\theta' > \theta$, there is a constant $M_{\theta'}$ such that
$$
\|z(z-A)^{-1}\| \le M_{\theta'}, \quad z \in {\mathbb C} - \Gamma_{\alpha,\theta'}.
$$
This is equivalent to saying that  both $A$ and $-A$ are sectorial operators with angle $\pi - \frac{\alpha}{2}(\frac{\pi}{2} + \theta)$. Then by \cite[Lemma 2.7]{LCL}, both $A$ and $-A$ generate bounded analytic
$\alpha/2$-times resolvent families of angle $\theta$, and $A^2$ also generates a bounded analytic $\alpha$-times resolvent family of angle $\theta$. The converse is also true by \cite[Proposition 5.6]{CLL}. For the case
that $\alpha= 2$, we recall the fact that the generator of an analytic cosine function is always a bounded operator. It is interesting here to mention a result of Fattorini \cite{Fattorini}: on a UMD space $X$, if $A^2$
generates a bounded cosine function, then $A$ generates a $C_0$-group. It is not clear whether a similar result holds for the generator of a fractional resolvent family.
\end{rem}

%With some additional hypotheses, the converse of Lemma \ref{decom} also holds.
%\begin{lem}\cite{CLL}\label{decom}
%Let $0< \alpha \le 2$. If $A^2$ generates a bounded analytic $\alpha$-times resolvent family $S_\alpha(t)$, then both $A$ and $-A$ generate $\alpha/2$-times resolvent families $S_{\alpha/2}^+(t)$ and $S_{\alpha/2}^-(t)$,
%respectively, and $S_\alpha(t) = \frac{1}{2}[S_{\alpha/2}^+(t)+ S_{\alpha/2}^-(t)]$.
%\end{lem}

Let us begin with the d'Alembert formula for an abstract version of (\ref{Fujitaequation}).
\begin{thm}\label{abFujita}
Let $1 < \alpha \le 2$. Suppose that both $A$ and $-A$ generate $\alpha/2$-times resolvent families $S_{\alpha/2}^+(t)$ and $S_{\alpha/2}^-(t)$ on $X$, respectively. Suppose also that $\phi \in X$, $ \psi \in R(A)$ and $\psi = A\Psi$ for some $\Psi \in D(A)$. Then
 the unique mild solution of the following integro-differential equation
\begin{equation}\label{Fujitaabstract}
u(t)= \phi + \frac{t^{\alpha/2}}{\Gamma(1+\frac{\alpha}{2})}\psi + \frac{1}{\Gamma(\alpha)}\int_0^t(t-s)^{\alpha-1}A^2u(s)ds.
\end{equation}
is given by
\begin{equation}\label{Fdalembert}
u(t) = \frac{1}{2}[S_{\alpha/2}^+(t) \phi + S_{\alpha/2}^- (t)\phi]+\frac{1}{2}[S_{\alpha/2}^+(t) \Psi - S_{\alpha/2}^-(t)\Psi].
\end{equation}
And the solution $u$ can be decomposed as $u= \frac{1}{2}(u^+ + u^-)$, where $u^+$ and $u^-$ are mild solutions to
\begin{equation}\label{abFujitahalf1}
u^+(t)= \phi + \Psi + \frac{1}{\Gamma(\alpha/2)}\int_0^t(t-s)^{\alpha/2-1}A u^+(s)ds, \quad t >0,
\end{equation}
and
\begin{equation}\label{abFujitahalf2}
u^-(t)= \phi - \Psi - \frac{1}{\Gamma(\alpha/2)}\int_0^t(t-s)^{\alpha/2-1}A u^-(s)ds, \quad t >0,
\end{equation}
respectively. Moreover, the corresponding fractional differential equation for (\ref{abFujita}) is
\begin{equation}
\begin{cases}
D_t^{\alpha/2} D_t^{\alpha/2} u(t) =  A^2 u(t), \quad t>0\\
u(0)=\phi, \quad D_t^{\alpha/2} u(0) = \psi,
\end{cases}
\end{equation}
and the fractional differential equation corresponding to (\ref{abFujitahalf1}) and (\ref{abFujitahalf2}) are
\begin{equation}\label{forduhamelfirst}
\begin{cases}
D_t^{\alpha/2}  u^+(t) =  A u^+(t), \quad t>0\\
u^+(0)= \phi+ \Psi.
\end{cases}
\end{equation}
and
\begin{equation}\label{forduhamelsecond}
\begin{cases}
D_t^{\alpha/2}  u^-(t) =  A u^-(t), \quad t>0\\
u^-(0)= \phi- \Psi.
\end{cases}
\end{equation}
respectively.
\end{thm}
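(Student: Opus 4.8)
The plan is to recognize (\ref{Fujitaabstract}) as a Volterra equation of the form (\ref{Volterra}) with operator $A^2$, and then verify that the candidate (\ref{Fdalembert}) is \emph{a} mild solution, invoking uniqueness from the theory of resolvent families. First I would rewrite (\ref{Fujitaabstract}) using the kernels $g_\beta$: since $\frac{t^{\alpha/2}}{\Gamma(1+\alpha/2)} = g_{1+\alpha/2}(t)$ and $\frac{1}{\Gamma(\alpha)}\int_0^t(t-s)^{\alpha-1}(\cdot)\,ds = (g_\alpha * \cdot)(t)$, the equation becomes
$$u(t) = f(t) + (g_\alpha * A^2 u)(t), \qquad f(t) = \phi + g_{1+\alpha/2}(t)\psi.$$
By Lemma \ref{decom} the operator $A^2$ generates the $\alpha$-times resolvent family $S_\alpha(t) = \frac{1}{2}[S_{\alpha/2}^+(t) + S_{\alpha/2}^-(t)]$, so the Volterra equation for $A^2$ is well-posed and its mild solution is unique. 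This reduces the theorem to exhibiting one mild solution of the asserted form.

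Next I would verify the candidate directly, writing it as $u = S_\alpha(\cdot)\phi + w$ with $w(t) = \frac{1}{2}[S_{\alpha/2}^+(t) - S_{\alpha/2}^-(t)]\Psi$, which coincides with (\ref{Fdalembert}) after regrouping. The term $S_\alpha(\cdot)\phi$ is exactly the mild solution of (\ref{ugalphaA}) for $A^2$ with datum $\phi$, hence accounts for $\phi + (g_\alpha * A^2 S_\alpha(\cdot)\phi)$. The crux is the term $w$. Because $\Psi \in D(A)$, the resolvent equation (\ref{A-resolventequation}) for $A$ and for $-A$ gives $S_{\alpha/2}^\pm(t)\Psi = \Psi \pm (g_{\alpha/2} * S_{\alpha/2}^\pm)(t)A\Psi$, and since $A\Psi = \psi$ this reads $S_{\alpha/2}^\pm(t)\Psi = \Psi \pm (g_{\alpha/2} * S_{\alpha/2}^\pm)(t)\psi$; subtracting and using $S_\alpha = \frac{1}{2}(S_{\alpha/2}^+ + S_{\alpha/2}^-)$ yields the clean identity $w = (g_{\alpha/2} * S_\alpha)(\cdot)\psi$. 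Convolving the form (\ref{X-resolventequation}) of the resolvent equation, $S_\alpha(t)\psi = \psi + A^2(g_\alpha * S_\alpha)(t)\psi$ (valid for all $\psi \in X$ by Remark \ref{remfirst}), with $g_{\alpha/2}$, and using $g_{\alpha/2} * g_1 = g_{1+\alpha/2}$ together with $g_{\alpha/2}*g_\alpha = g_{3\alpha/2}$, then shows $w = g_{1+\alpha/2}\psi + (g_\alpha * A^2 w)$. Adding the two parts gives (\ref{Fujitaabstract}).

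For the decomposition I would observe that (\ref{abFujitahalf1}) and (\ref{abFujitahalf2}) are precisely the equation (\ref{ugalphaA}) for the operators $A$ and $-A$ with data $\phi + \Psi$ and $\phi - \Psi$, so their unique mild solutions are $u^\pm(t) = S_{\alpha/2}^\pm(t)(\phi \pm \Psi)$; adding and regrouping reproduces (\ref{Fdalembert}), whence $u = \frac{1}{2}(u^+ + u^-)$. Finally, the passage to the fractional differential equations (\ref{forduhamelfirst})--(\ref{forduhamelsecond}) and to the $\alpha$-order problem follows from the standard equivalence between the Volterra equation (\ref{ugalphaA}) and the Caputo problem (\ref{alphaequation}) recalled from \cite{Baj}, applying $D_t^{\alpha/2}$ once for the $\pm A$ problems and twice for the $A^2$ problem, as illustrated in the Introduction.

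The main obstacle I expect is the bookkeeping of domains and closedness in the verification step: to move $A^2$ past the scalar convolution by $g_{\alpha/2}$ I need the closedness of $A^2$ together with the fact that $(g_\alpha * S_\alpha)(s)\psi \in D(A^2)$ with $s \mapsto A^2(g_\alpha*S_\alpha)(s)\psi = S_\alpha(s)\psi - \psi$ continuous, so that the convolution stays in $D(A^2)$ and commutes with $A^2$. The hypothesis $\psi = A\Psi$ with $\Psi \in D(A)$ is exactly what promotes $S_{\alpha/2}^\pm(\cdot)\Psi$ from a mild to a strong solution, allowing the resolvent equation to be applied pointwise; verifying the regularity needed for the initial condition $D_t^{\alpha/2}u(0) = \psi$ and for the two successive Caputo derivatives will require the same care.
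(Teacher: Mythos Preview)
Your proposal is correct and complete, but organized differently from the paper's proof. The paper starts from the half-order equations: it takes $u^\pm$ as mild solutions of (\ref{abFujitahalf1})--(\ref{abFujitahalf2}), writes $u^+ + u^- = 2\phi + A J_t^{\alpha/2}(u^+-u^-)$ and $u^+ - u^- = 2\Psi + A J_t^{\alpha/2}(u^++u^-)$, substitutes one into the other, and reads off that $\tfrac12(u^++u^-)$ solves (\ref{Fujitaabstract}) in mild form; only then does it invoke $u^\pm(t)=S_{\alpha/2}^\pm(t)(\phi\pm\Psi)$ to obtain (\ref{Fdalembert}). You instead verify the closed formula (\ref{Fdalembert}) directly, splitting it as $S_\alpha(\cdot)\phi + w$ and deriving the clean identity $w=(g_{\alpha/2}*S_\alpha)\psi$ from the resolvent equations for $\pm A$; this identity is exactly the $\beta=\alpha/2$ case of the representation $u=S_\alpha\phi + J_t^\beta S_\alpha\psi$ that the paper establishes later in Theorem~\ref{thmab}. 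Your route makes the link with the general $(\alpha,\beta)$ formula transparent and isolates precisely where the hypothesis $\Psi\in D(A)$ is used, at the cost of the extra domain/closedness bookkeeping you flag; the paper's route is shorter and keeps the argument entirely at the level of the integral equations, avoiding any explicit commutation of $A^2$ with the convolution.
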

\begin{proof}
 Suppose that $u^+$ and $u^-$ are mild solutions of (\ref{abFujitahalf1}) and (\ref{abFujitahalf2}) respectively. Then $J_t^{\alpha/2}u^+, J_t^{\alpha/2}u^- \in D(A)$ and  the integral equation
$$
u^+  = \phi +  \Psi+ AJ^{\alpha/2}_t u^+
$$
and
$$
u^-  = \phi -  \Psi- AJ^{\alpha/2}_t u^-
$$
hold. Thus,
\begin{eqnarray*}
 u^+ + u^- &=& 2\phi + A J_t^{\alpha/2}(u^+ - u^-) \\
%&=& \phi + A  J_t^{\alpha/2} [\frac{1}{2}(\phi+ \Psi) + A J_t^{\alpha/2}u^+ - \frac{1}{2}(\phi- \Psi) + A J_t^{\alpha/2}u^- ]\\
&=& 2\phi +  A  J_t^{\alpha/2} [2 \Psi + A J_t^{\alpha/2}(u^+ + u^-)];
\end{eqnarray*}
since $ \Psi \in D(A)$, this implies that $J_t^\alpha(u^+ + u^-) \in D(A^2)$ and
$$
u^+ + u^- = 2\phi +2 J_t^{\alpha/2} \psi + A^2 J_t^\alpha(u^+ + u^-).
$$
Therefore, the function $u := \frac{1}{2}(u^+ + u^-)$ is the mild solution for (\ref{Fujitaabstract}). Moreover, since both $A$ and $-A$ generate $\alpha/2$-times resolvent families,
$$
u^+(t) = S_{\alpha/2}^+(t) (\phi + \Psi), \quad u^-(t) = S_{\alpha/2}^-(t) (\phi - \Psi),
$$
this gives the representation (\ref{Fdalembert}). The uniqueness of the mild solution follows from the well-posedness
of the integro-differential equation
$$
u(t)= y +  \frac{1}{\Gamma(\alpha)}\int_0^t(t-s)^{\alpha-1}A^2u(s)ds.
$$
where $y \in X$, which is guaranteed by Lemma \ref{decom}. The corresponding fractional differential equations can be derived by differentiation for fractional times.
\end{proof}

\begin{rem}\label{rem2.6}
(1) When $\alpha=2$ and $A= d/dx$ on the line, then
$$
S_{1}^+(t) \phi(x)= \phi(x+t), \quad S_{1}^-(t) \phi(x)= \phi(x-t),
$$
$\Psi(x) = \int_0^x \psi(y)dy$, and
$$
S_{\alpha/2}^+(t) \Psi(x) - S_{\alpha/2}^-(t)\Psi(x)= \int_0^{x+t} \psi(y)dy - \int_0^{x-t} \psi(y) dy = \int_{x-t}^{x+t} \psi(y)dy.
$$
Hence the formula (\ref{Fdalembert}) is exactly the classical d'Alembert formula (\ref{dAlmbert}). Thus (\ref{Fdalembert}) can be considered as the d'Alembert formula for the abstract integro-differential equation (\ref{Fujitaabstract}).

(2) When $A = d/dx$ on the line, the equations (\ref{abFujitahalf1}) and (\ref{abFujitahalf2}) are the same as Fujita's equation (\ref{Fujitaequation1}), so our decomposition can be viewed as the abstract version of Fujita's
decomposition.

(3) When $\alpha =2$, our expression (\ref{Fdalembert}) reduces to the formula (1.13) in \cite[Chapter 3]{Krein} given by Krein.

 %One can also derive (\ref{Fdalembert}) for the Volterra equation (\ref{Fujitaabstract}) by the method of Pr\"uss. Let $f(t)=\phi + \frac{t^{\alpha/2}}{\Gamma(1+{\alpha\over 2})}\psi$, then we have $f: [0,+\infty)\mapsto X$, and %$f\in W^{1,1}([0,T], X)$ for every $T > 0$,
%by Proposition 1.2 on Page 33-34 in \cite{Pruss}, the integral equation (\ref{Fujitaabstract}) has a mild (strong) solution
%\begin{eqnarray*}
%u(t)&= &S_\alpha (t)f(0)+ \int_0^tS_\alpha(t-s)f^\prime (s)ds\\
%&=&S_\alpha (t)\phi+\int_0^tS_\alpha(t-s)\frac{s^{\alpha/2-1}}{\Gamma(\alpha/2)}\psi ds \\
%&=& \frac{1}{2}(S_{\alpha/2}^+(t) + S_{\alpha/2}^-(t))\phi + \frac{1}{2}g_{\alpha/2}* (S_{\alpha/2}^+(t) + S_{\alpha/2}^-(t)) A\Psi\\
%&=& \frac{1}{2}(S_{\alpha/2}^+(t) + S_{\alpha/2}^-(t))\phi + \frac{1}{2}Ag_{\alpha/2}* (S_{\alpha/2}^+(t) + S_{\alpha/2}^-(t)) \Psi\\
%&=& \frac{1}{2}(S_{\alpha/2}^+(t) + S_{\alpha/2}^-(t))\phi + \frac{1}{2}(S_{\alpha/2}^+(t)\Psi - \Psi + \Psi - S_{\alpha/2}^-(t)\Psi)\\
%&=& \frac{1}{2}(S_{\alpha/2}^+(t) + S_{\alpha/2}^-(t))\phi + \frac{1}{2}(S_{\alpha/2}^+(t) - S_{\alpha/2}^-(t))\Psi
%\end{eqnarray*}
%the same as (\ref{Fdalembert}).
\end{rem}

By Lemma \ref{decom} we can also derive the following result for more general fractional integro-differential equations.
\begin{thm}\label{Vfdecom}
Let $0 < \alpha \le 2$, $f$ be a continuous function on $X$ which is in $W^{1,1}([0,T],X)$ for every $T>0$, and $A$  a densely defined closed operator on $X$. If the two
Volterra equations
\begin{equation}\label{fvolterra+}
u_{\alpha/2}^+(t) = f(t) + \frac{1}{\Gamma(\alpha/2)}\int_0^t(t-s)^{\alpha/2-1}A u_{\alpha/2}^+(s)ds, \quad t \ge 0
\end{equation}
and
\begin{equation}\label{fvolterra-}
u_{\alpha/2}^-(t) = f(t) - \frac{1}{\Gamma(\alpha/2)}\int_0^t(t-s)^{\alpha/2-1} A u_{\alpha/2}^-(s)ds, \quad t \ge 0
\end{equation}
are well-posed, then the Volterra equation
\begin{equation}\label{fvolterra}
u_{\alpha}(t) = f(t) + \frac{1}{\Gamma(\alpha)}\int_0^t(t-s)^{\alpha-1} A^2 u_{\alpha}(s)ds, \quad t \ge 0
\end{equation}
 is also well-posed. Moreover, the unique mild solution of $(\ref{fvolterra})$ is given by
$$
u_{\alpha}(t) = \frac{1}{2}(u_{\alpha/2}^+(t) +u_{\alpha/2}^-(t)),
$$
where $u_{\alpha/2}^+(t)$ and  $u_{\alpha/2}^-(t)$ are mild solutions of (\ref{fvolterra+}) and (\ref{fvolterra-}), respectively.
\end{thm}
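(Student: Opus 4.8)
The plan is to reduce Theorem~\ref{Vfdecom} to the special inhomogeneous Volterra equation $u(t) = y + \frac{1}{\Gamma(\alpha)}\int_0^t (t-s)^{\alpha-1} A^2 u(s)\,ds$ whose well-posedness is already available through Lemma~\ref{decom}, and then to handle the general forcing term $f$ by Duhamel's principle (formula~(\ref{Prussf})). First I would observe that the hypothesis that (\ref{fvolterra+}) and (\ref{fvolterra-}) are well-posed means, by the characterization of Pr\"uss recalled after Remark~\ref{remfirst}, that $A$ and $-A$ each generate an $\alpha/2$-times resolvent family, say $S^+_{\alpha/2}(t)$ and $S^-_{\alpha/2}(t)$. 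Lemma~\ref{decom} then immediately gives that $A^2$ generates the $\alpha$-times resolvent family $S_\alpha(t) = \frac{1}{2}[S^+_{\alpha/2}(t) + S^-_{\alpha/2}(t)]$, which is precisely the statement that (\ref{fvolterra}) is well-posed. This settles the well-posedness claim with no computation.

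For the explicit solution formula, I would follow the computation already carried out in the proof of Theorem~\ref{abFujita}, but now with the operator equations written in convolution form for a general forcing $f$. Writing $u^\pm := u^\pm_{\alpha/2}$, the mild-solution definition gives $u^\pm = f \pm A J^{\alpha/2}_t u^\pm$. Adding these,
\begin{equation*}
u^+ + u^- = 2f + A J^{\alpha/2}_t(u^+ - u^-) = 2f + A J^{\alpha/2}_t\bigl[A J^{\alpha/2}_t(u^+ + u^-)\bigr],
\end{equation*}
where in the last step I substituted $u^+ - u^- = A J^{\alpha/2}_t(u^+ + u^-)$, again read off from the two defining equations. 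Since $J^{\alpha/2}_t \ast J^{\alpha/2}_t = J^\alpha_t$ by the semigroup property of Riemann--Liouville integrals ($g_{\alpha/2} \ast g_{\alpha/2} = g_\alpha$), this collapses to $u^+ + u^- = 2f + A^2 J^\alpha_t(u^+ + u^-)$, i.e. $\tfrac12(u^+ + u^-)$ is the mild solution of (\ref{fvolterra}). Uniqueness then follows from the well-posedness established in the first step.

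The one point requiring genuine care—rather than a routine copy of the earlier argument—is the manipulation $u^+ - u^- = A J^{\alpha/2}_t(u^+ + u^-)$ and the passage of $A$ through the convolution, since here we only assume mild solutions, so a priori $u^\pm$ need not take values in $D(A)$; only $J^{\alpha/2}_t u^\pm \in D(A)$ is guaranteed. I expect this to be the main obstacle: I must verify that $A J^{\alpha/2}_t(u^+ + u^-)$ is well-defined and that applying $A$ a second time lands in $D(A^2)$ with $J^\alpha_t(u^++u^-) \in D(A^2)$, exactly as in the $\Psi \in D(A)$ step of Theorem~\ref{abFujita}. This is handled by invoking the closedness of $A$ together with Remark~\ref{remfirst}: for mild solutions one has $(g_{\alpha/2}\ast u^\pm)(t)\in D(A)$ and $A$ commutes with the scalar convolution, so the intermediate quantities are legitimate. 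The regularity hypothesis $f \in W^{1,1}([0,T];X)$ guarantees continuity of the mild solutions and the differentiability needed to justify the Duhamel representation (\ref{Prussf}) if one wishes to exhibit $u_\alpha(t) = \frac{d}{dt}\int_0^t S_\alpha(t-s) f(s)\,ds$ explicitly; I would note this representation as a corollary of the resolvent-family structure rather than re-deriving it.
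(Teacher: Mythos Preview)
Your argument is correct, but it takes a different route from the paper's. The paper does not repeat the algebraic manipulation from Theorem~\ref{abFujita}; instead it goes straight through the Duhamel representation~(\ref{Prussf}). Since $f\in W^{1,1}([0,T];X)$, the mild solutions of all three equations can be written explicitly as
\[
u_{\alpha/2}^\pm(t)=S_{\alpha/2}^\pm(t)f(0)+\int_0^t S_{\alpha/2}^\pm(t-s)f'(s)\,ds,\qquad
u_\alpha(t)=S_\alpha(t)f(0)+\int_0^t S_\alpha(t-s)f'(s)\,ds,
\]
and the decomposition $u_\alpha=\tfrac12(u_{\alpha/2}^++u_{\alpha/2}^-)$ is then immediate from $S_\alpha=\tfrac12(S_{\alpha/2}^++S_{\alpha/2}^-)$ by linearity. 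This is shorter and entirely avoids the domain bookkeeping you flagged as the ``one point requiring genuine care''; the $W^{1,1}$ hypothesis is used essentially, not incidentally. Your approach, by contrast, works at the level of the integral equations themselves and would go through under weaker regularity on $f$ (you only need the mild solutions to exist and be continuous), at the cost of having to justify $J_t^\alpha(u^++u^-)\in D(A^2)$ via closedness of $A$---which you did correctly. Both are fine; the paper's is the cleaner fit for the stated hypotheses.
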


\begin{proof}
The well-posedness of (\ref{fvolterra+}) and (\ref{fvolterra-}) implies that both $A$ and $-A$ generate $\alpha/2$-times resolvent families on $X$, and  we then denote them by $S_{\alpha/2}^+(t)$ and $S_{\alpha/2}^-(t)$ respectively. By Lemma
\ref{decom} $A^2$ generates $\alpha$-times resolvent family $S_\alpha(t)$, and therefore the Volterra equation (\ref{fvolterra}) is also well-posed. By (\ref{Prussf}) we have the mild solution for (\ref{fvolterra+}) and
(\ref{fvolterra-}) are given by
$$
u_{\alpha/2}^+(t)= S_{\alpha/2}^+(t)f(0) + \int_0^t S_{\alpha/2}^+(t-s){f}^\prime(s)ds
$$
and
$$
u_{\alpha/2}^-(t)= S_{\alpha/2}^-(t)f(0) + \int_0^t S_{\alpha/2}^-(t-s){f}^\prime(s)ds,
$$
respectively, and the mild solution for (\ref{fvolterra}) is given by
$$
u_{\alpha}(t)= S_{\alpha}(t)f(0) + \int_0^t S_{\alpha}(t-s){f}^\prime(s)ds,
$$
therefore our claim follows from the fact that $S_\alpha(t) = (S_{\alpha/2}^+(t) + S_{\alpha/2}^-(t))/2$.
\end{proof}

As a consequence of  Theorem \ref{Vfdecom}, we have the next decomposition of the solutions to the following integro-differential equations mentioned in Introduction, and also for their corresponding fractional equations
with Caputo derivatives.

\begin{thm}\label{thmab}
Let $1< \alpha \le 2$, $\alpha/2 \leq \beta \leq \alpha$. Suppose that both $A$ and $-A$ generate $\alpha/2$-times resolvent families $S_{\alpha/2}^+(t)$ and $S_{\alpha/2}^-(t)$ on $X$, respectively.
Then for $\phi, \psi \in X$, the integro-differential equation
\begin{equation} \label{general betaA}
u(t)= \phi + \frac{t^{\beta}}{\Gamma(1+\beta)}\psi + \frac{1}{\Gamma(\alpha)}\int_0^t(t-s)^{\alpha-1}A^2 u(s)ds, \quad t >0
\end{equation}
has a unique mild solution which is given by
\begin{equation}\label{alphabetasolution}
u(t) = \frac{1}{2}(S_{\alpha/2}^+(t) \phi + S_{\alpha/2}^-(t)\phi) + \frac{1}{2}(J_t^\beta S_{\alpha/2}^+(t) \psi + J_t^\beta S_{\alpha/2}^-(t)\psi),
\end{equation}
and $u(t)$  can be decomposed into
$$
u(t) = \frac{1}{2}(u^+(t) +u^-(t)),
$$
where $u^+(t)$ and $u^-(t)$ are mild solutions to
\begin{equation} \label{general betaA+}
u^+(t)= \phi + \frac{t^{\beta}}{\Gamma(1+\beta)}\psi+ \frac{1}{\Gamma(\alpha/2)}\int_0^t(t-s)^{\alpha/2-1}A u^+(s)ds
\end{equation}
and
\begin{equation} \label{general betaA-}
u^-(t)= \phi + \frac{t^{\beta}}{\Gamma(1+\beta)}\psi- \frac{1}{\Gamma(\alpha/2)}\int_0^t(t-s)^{\alpha/2-1}A u^-(s)ds
\end{equation}
respectively. Moreover,  when $\beta=1$, (\ref{alphabetasolution}) is
\begin{equation}\label{firstorder}
u(t) = \frac{1}{2}[S_{\alpha/2}^+(t) \phi + S_{\alpha/2}^- (t)\phi]+\frac{1}{2}\int_0^t [S_{\alpha/2}^+(s) \psi + S_{\alpha/2}^-(s) \psi]ds,
\end{equation}
which gives the unique mild solution to the fractional equation
\begin{equation}\label{ut-equation}
\begin{cases}
D_t^\alpha u(t) = A^2u(t), \quad t>0\\
u(0)= \phi, \quad u_t'(0)= \psi;
\end{cases}
\end{equation}
if $\beta \not=1$, (\ref{alphabetasolution}) gives the unique mild solution to the fractional equation
\begin{equation}\label{alpha-beta}
\begin{cases}
D_t^{\alpha}u(t) = A^2 u(t)+ \frac{t^{\beta-\alpha}}{\Gamma(\beta-\alpha+1)}\psi, \quad t>0\\
u(0)= \phi,\quad  D_t^\beta u(0)= \psi.
\end{cases}
\end{equation}
In both cases $u^+(t)$ and $u^-(t)$ given by (\ref{general betaA+}) and (\ref{general betaA-}), respctively, are mild solutions to
\begin{equation}\label{alpha-beta+}
\begin{cases}
D_t^{\alpha/2}u^+(t) = A u^+(t)+ \frac{t^{\beta-\alpha/2}}{\Gamma(\beta-\alpha/2+1)}\psi, \quad t>0\\
u^+(0)= \phi,
\end{cases}
\end{equation}
and
\begin{equation}\label{alpha-beta-}
\begin{cases}
D_t^{\alpha/2}u^-(t) = -A u^-(t)+ \frac{t^{\beta-\alpha/2}}{\Gamma(\beta-{\alpha \over 2}+1)}\psi, \quad t>0\\
u^-(0)= \phi,
\end{cases}
\end{equation}
respectively. If $\phi \in D(A^2)$ and $\psi \in D(A)$, then both the above mild solutions are strong solutions.
\end{thm}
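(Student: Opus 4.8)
The plan is to reduce the first half of the statement to Theorem \ref{Vfdecom} and then obtain the fractional-differential characterizations by differentiating the integro-differential equations. First I would set $f(t):=\phi+g_{\beta+1}(t)\psi$, noting that $f$ is continuous and lies in $W^{1,1}([0,T];X)$ for every $T$ (since $\beta>0$ makes $f'(t)=g_\beta(t)\psi$ integrable near the origin) with $f(0)=\phi$. Then (\ref{general betaA}), (\ref{general betaA+}) and (\ref{general betaA-}) are exactly (\ref{fvolterra}), (\ref{fvolterra+}) and (\ref{fvolterra-}) for this $f$. Because $A$ and $-A$ generate $S_{\alpha/2}^\pm$, the two half-equations are well-posed, so Theorem \ref{Vfdecom} immediately yields well-posedness of (\ref{general betaA}), the decomposition $u=\frac12(u^++u^-)$, and uniqueness. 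Feeding $f(0)=\phi$ and $f'=g_\beta\psi$ into the solution formula used in that proof gives $u^\pm(t)=S_{\alpha/2}^\pm(t)\phi+(g_\beta\ast S_{\alpha/2}^\pm)(t)\psi=S_{\alpha/2}^\pm(t)\phi+J_t^\beta S_{\alpha/2}^\pm(t)\psi$, whose average is precisely (\ref{alphabetasolution}); for $\beta=1$ the convolution $J_t^1 S_{\alpha/2}^\pm\psi$ becomes $\int_0^t S_{\alpha/2}^\pm(s)\psi\,ds$, giving (\ref{firstorder}).

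Next I would pass to the Caputo formulations by applying fractional derivatives to the integro-differential equations, using $D_t^\alpha J_t^\alpha=\mathrm{id}$, the cancellation $D_t^\alpha J_t^\alpha A^2u=A^2u$, and $D_t^\alpha\phi=0$. Applying $D_t^\alpha$ to (\ref{general betaA}) leaves $D_t^\alpha u=A^2u+\psi\,D_t^\alpha g_{\beta+1}$, so everything hinges on the behaviour of $D_t^\alpha g_{\beta+1}$ for $1<\alpha\le2$: the Caputo derivative annihilates affine functions, hence when $\beta=1$ (the only relevant integer value, as $\beta\ge\alpha/2>\frac12$ rules out $\beta=0$) the monomial $g_2=t$ lies in the kernel and $D_t^\alpha g_2=0$, producing the homogeneous equation (\ref{ut-equation}); for every other admissible $\beta$ the power rule gives $D_t^\alpha g_{\beta+1}=g_{\beta+1-\alpha}=t^{\beta-\alpha}/\Gamma(\beta-\alpha+1)$, producing the forcing term in (\ref{alpha-beta}). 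The initial data are read off from (\ref{general betaA}): $t=0$ gives $u(0)=\phi$, while $D_t^\beta g_{\beta+1}=g_1\equiv1$ together with $D_t^\beta J_t^\alpha A^2u=J_t^{\alpha-\beta}A^2u$, which vanishes at $t=0$ for $\beta<\alpha$, gives $D_t^\beta u(0)=\psi$; when $\beta=1$ this reads $u_t'(0)=\psi$ (differentiating once and using $\frac{d}{dt}J_t^\alpha A^2u=J_t^{\alpha-1}A^2u\to0$ as $t\to0$).

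The half-equations are handled the same way: applying $D_t^{\alpha/2}$ to (\ref{general betaA+}) and (\ref{general betaA-}) and using $D_t^{\alpha/2}J_t^{\alpha/2}(\pm A)u^\pm=\pm Au^\pm$, $D_t^{\alpha/2}\phi=0$, and $D_t^{\alpha/2}g_{\beta+1}=g_{\beta+1-\alpha/2}$, I would obtain (\ref{alpha-beta+}) and (\ref{alpha-beta-}) with $u^\pm(0)=\phi$; here no special case at $\beta=1$ arises, since for $\alpha/2\le1$ the kernel of $D_t^{\alpha/2}$ consists only of constants and $\beta>\frac12$ excludes $\beta=0$. For the strong-solution claim I would combine the resolvent-family property (2), the identity (\ref{X-resolventequation}), and closedness of $A$: writing $g_\beta=g_{\beta-\alpha/2}\ast g_{\alpha/2}$ (valid since $\beta\ge\alpha/2$) gives $J_t^\beta S_{\alpha/2}^\pm x=g_{\beta-\alpha/2}\ast\bigl((g_{\alpha/2}\ast S_{\alpha/2}^\pm)x\bigr)$, and since $(g_{\alpha/2}\ast S_{\alpha/2}^\pm)(t)x\in D(A)$ for all $x\in X$, closedness forces $J_t^\beta S_{\alpha/2}^\pm(t)x\in D(A)$. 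Applying this once to $\psi\in D(A)$ and once to $A\psi$, and using $S_{\alpha/2}^\pm(t)\phi\in D(A^2)$ for $\phi\in D(A^2)$, shows $u(t)\in D(A^2)$, so all identities hold pointwise in $X$ and $u$ is a strong solution.

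The main obstacle is the Caputo power-function computation and the $\beta=1$ dichotomy: one must verify carefully that $D_t^\alpha g_{\beta+1}$ switches between $0$ and $g_{\beta+1-\alpha}$ exactly according to whether $g_{\beta+1}$ is affine, and confirm that the boundary terms in the Caputo--Riemann--Liouville comparison vanish so that $D_t^\beta u(0)=\psi$ (respectively $u_t'(0)=\psi$) holds, with the endpoint $\beta=\alpha$ requiring separate attention since there the smoothing factor $J_t^{\alpha-\beta}$ degenerates. The commutations of $A$ and $A^2$ through the fractional integrals, justified by closedness, are routine but should be stated explicitly.
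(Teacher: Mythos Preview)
Your proposal is correct and follows essentially the same route as the paper: apply Theorem \ref{Vfdecom} with $f(t)=\phi+g_{\beta+1}(t)\psi$ to obtain the mild solution, its decomposition, and formula (\ref{alphabetasolution}), then differentiate the integro-differential equations fractionally to reach the Caputo formulations, and finally use the factorization $J_t^\beta=J_t^{\beta-\alpha/2}J_t^{\alpha/2}$ together with the resolvent equation and closedness of $A$ to upgrade to strong solutions when $\phi\in D(A^2)$, $\psi\in D(A)$. Your discussion of the $\beta=1$ dichotomy and of the initial conditions is more explicit than the paper's, which simply states that the fractional equations ``follow by differentiating''; the strong-solution argument you sketch is exactly the paper's computation of $(J_t^\beta S_\alpha)(t)A\psi$ phrased in slightly more abstract terms.
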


\begin{proof}
By taking $f(t) = \phi + \frac{t^{\beta}}{\Gamma(1+\beta)}\psi$ in Theorem \ref{Vfdecom}, we have that the mild solution to (\ref{general betaA}) is given by
$$
u(t) = S_\alpha(t) \phi + \int_0^t S_\alpha(t-s) \frac{s^{\beta-1}}{\Gamma(\beta)}\psi ds =  S_\alpha(t) \phi + (J_t^\beta S_\alpha)(t)\psi .
$$
This gives (\ref{alphabetasolution}) since $S_\alpha(t) = \frac{1}{2}(S_{\alpha/2}^+(t) \phi + S_{\alpha/2}^- (t))$. Equations (\ref{ut-equation}) and (\ref{alpha-beta}) follow by differentiating (\ref{general betaA})
$\alpha$-times,
and (\ref{alpha-beta+}), (\ref{alpha-beta-}) hold by differentiating (\ref{general betaA+}), (\ref{general betaA-})   $\alpha/2$-times, respectively.

Now for $\phi \in D(A^2)$ and $\psi \in D(A)$, we need only show that $u(t)$ is a strong solution of (\ref{general betaA}). Indeed, by the resolvent equation (\ref{A-resolventequation}) and Remark \ref{remfirst} we have
\begin{eqnarray*}
u(t) &= & S_\alpha(t) \phi + (J_t^\beta S_\alpha)(t)\psi\\
&=& \phi + (g_\alpha * S_\alpha)(t) A^2 \phi + J_t^\beta(\psi + A^2(g_\alpha* S_\alpha)(t)\psi)\\
&=& \phi + \frac{t^\beta}{\Gamma(\beta +1)} \psi + (g_\alpha*A^2 S_\alpha)(t)\phi +J_t^\beta  A^2(g_\alpha* S_\alpha)(t)\psi.
\end{eqnarray*}
To show that $u$ is a strong solution it remains to show that $(J_t^\beta S_\alpha)(t) \psi \in D(A^2)$ and then $J_t^\beta  A^2 J_t^\alpha  S_\alpha(t)\psi = J_t^\alpha A^2 J_t^\beta S_\alpha(t)\phi$ by the closedness of
the operator $A$. Since $\beta \ge \alpha/2$, by the semigroup properties of Riemann-Liouville integrals and the resolvent equation,
\begin{eqnarray*}
(J_t^\beta S_\alpha)(t) A \psi&=& \frac{1}{2}
(J_t^\beta S_{\alpha/2}^+(t) A \psi + J_t^\beta S_{\alpha/2}^-(t) A\psi)\\
&=& \frac{1}{2}J_t^{\beta-\alpha/2}(J_t^{\alpha/2} S_{\alpha/2}^+(t) A\psi + J_t^{\alpha/2} S_{\alpha/2}^-(t) A\psi)\\
&=& \frac{1}{2}J_t^{\beta-\alpha/2}(S_{\alpha/2}^+(t)\psi -\psi + \psi-  S_{\alpha/2}^-(t) \psi)\\
&=& \frac{1}{2}J_t^{\beta-\alpha/2}(S_{\alpha/2}^+(t)\psi-  S_{\alpha/2}^-(t) \psi),
\end{eqnarray*}
and this implies that $(J_t^\beta S_\alpha)(t) \psi \in D(A^2)$.
\end{proof}
\begin{rem}
(1) When $\alpha=2$, $\beta =1$ and $A= d/dx$ on the line, then
$$
S_{1}^+(t) \phi(x)= \phi(x+t), \quad S_{1}^-(t) \phi(x)= \phi(x-t),
$$
and
$$
\int_0^t [S_{\alpha/2}^+(s) \psi(x) + S_{\alpha/2}^-(s) \psi(x)]ds=\int_0^t [\psi(x+s) + \psi(x-s)]ds=  \int_{x-t}^{x+t} \psi(y)dy ,
$$
so the formula (\ref{alphabetasolution}) is exactly the classical d'Alembert's formula (\ref{dAlmbert}). It is therefore reasonable to call
(\ref{alphabetasolution}) the d'Alembert formula for (\ref{general betaA}).

(2) When $\beta =1$, the equation (\ref{general betaA}) becomes
\begin{equation}\label{tdAlembert}
u(t)= \phi + t\psi + \frac{1}{\Gamma(\alpha)}\int_0^t(t-s)^{\alpha-1}A^2u(s)ds.
\end{equation}
 The identity
for $u$ in (\ref{firstorder}) could be considered as the d'Alembert formula for
Volterra equation (\ref{tdAlembert}).

(3) When $\beta = \alpha/2$, then (\ref{general betaA}) is in fact (\ref{Fujitaabstract}). The formula
$$
u(t) = \frac{1}{2}(S_{\alpha/2}^+(t) \phi + S_{\alpha/2}^-(t)\phi) + \frac{1}{2}(J_t^{\alpha/2} S_{\alpha/2}^+(t) \psi + J_t^{\alpha/2} S_{\alpha/2}^-(t)\psi)
$$
is then an alternative d'Alembert formula for (\ref{Fujitaabstract}), and a generalization of (\ref{Fujitaformula}). Their difference will be clarified in Remark \ref{difference}.

(4) The $\alpha$-order equation
\begin{equation*}
\begin{cases}
D_t^\alpha u(t) = A^2 u(t), \quad t>0\\
u(0)= \phi, \quad u_t'(0)= 0
\end{cases}
\end{equation*}
is equivalent to the sequential $\alpha$-order equation
\begin{equation*}
\begin{cases}
D_t^{\alpha/2}D_t^{\alpha/2} u(t) = A^2 u(t), \quad t>0\\
u(0)= \phi, \quad D_t^{\alpha/2}(0)= 0;
\end{cases}
\end{equation*}
their corresponding integral equation is
\begin{equation*}
u(t)= \phi +  \frac{1}{\Gamma(\alpha)}\int_0^t(t-s)^{\alpha-1}A^2u(s)ds.
\end{equation*}
The solution is given by $u(t) = \frac{1}{2}(S_{\alpha/2}^+(t) \phi + S_{\alpha/2}^-(t)\phi)$.
\end{rem}

On the other hand, motivated by the analysis in the Introduction, we connect the integro-differential
equation (\ref{general betaA}) with a fractional differential equation of
sequential Caputo derivatives \cite{Podlubny}.

\begin{thm}\label{afie}
Let $1< \alpha \le 2$ and $\alpha/2 \leq \beta \leq \alpha$. Suppose that both $A$ and $-A$ generate $\alpha/2$-times resolvent families $S_{\alpha/2}^+(t)$ and $S_{\alpha/2}^-(t)$ on $X$, respectively.  Suppose also that
$\phi \in D(A^2)$
and $\psi  \in D(A)$. Then (\ref{alphabetasolution}) gives
a strong solution to the sequential $\alpha$-order Cauchy problem
\begin{equation}\label{two-equation2}
\begin{cases}
D_t^{\alpha- \beta} D_t^{\beta} u(t) = A^2 u(t), \quad t>0\\
u(0)= \phi, \quad D_t^{\beta}u(0)= \psi.
\end{cases}
\end{equation}
\end{thm}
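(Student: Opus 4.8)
The plan is to deduce Theorem~\ref{afie} from the strong-solution statement already established in Theorem~\ref{thmab}, so that no new representation formula is needed and the entire content becomes a fractional-differentiation computation. Under the standing hypotheses $\phi\in D(A^2)$ and $\psi\in D(A)$, Theorem~\ref{thmab} gives that the function $u$ in (\ref{alphabetasolution}), which by Lemma~\ref{decom} equals $u(t)=S_\alpha(t)\phi+(J_t^\beta S_\alpha)(t)\psi$ with $S_\alpha=\tfrac12(S_{\alpha/2}^++S_{\alpha/2}^-)$, is a \emph{strong} solution of the integro-differential equation (\ref{general betaA}). The crucial by-product is regularity: $u(t)\in D(A^2)$ for all $t\ge0$ and $t\mapsto A^2u(t)$ is continuous. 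Rewriting (\ref{general betaA}) as
$$
u(t)=\phi+g_{\beta+1}(t)\psi+J_t^\alpha A^2u(t),
$$
the initial value $u(0)=\phi$ is immediate, since $S_\alpha(0)=I$ and the Riemann--Liouville integral term vanishes at $t=0$; everything else follows by applying the Caputo derivatives $D_t^\beta$ and then $D_t^{\alpha-\beta}$ to this identity.

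First I would apply $D_t^\beta$. Here $D_t^\beta\phi=0$; since $g_{\beta+1}=J_t^\beta g_1$ with $g_1\equiv1$, the identity $D_t^\beta J_t^\beta=\mathrm{id}$ gives $D_t^\beta(g_{\beta+1}\psi)=\psi$; and because $\beta\le\alpha$ the semigroup law $J_t^\alpha=J_t^\beta J_t^{\alpha-\beta}$ together with $D_t^\beta J_t^\beta=\mathrm{id}$ yields $D_t^\beta J_t^\alpha A^2u=J_t^{\alpha-\beta}A^2u$. Combining these,
$$
D_t^\beta u(t)=\psi+J_t^{\alpha-\beta}A^2u(t).
$$
Evaluating at $t=0$ and using that $A^2u$ is continuous (so that $J_t^{\alpha-\beta}A^2u\to0$ as $t\to0^+$ whenever $\alpha-\beta>0$) produces the second initial condition $D_t^\beta u(0)=\psi$.

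Next I would apply $D_t^{\alpha-\beta}$ to the displayed identity for $D_t^\beta u$. The Caputo derivative annihilates the constant $\psi$, and $D_t^{\alpha-\beta}J_t^{\alpha-\beta}(A^2u)=A^2u$ by the same cancellation rule, which gives exactly $D_t^{\alpha-\beta}D_t^\beta u(t)=A^2u(t)$, i.e.\ the differential equation in (\ref{two-equation2}). Note $\alpha-\beta\in[0,\alpha/2]\subseteq[0,1]$, so $D_t^{\alpha-\beta}$ is a first-order Caputo derivative and the cancellation applies in its simplest form; the step where the constant term drops is precisely where one needs $\alpha-\beta>0$, so the argument is cleanest for $\alpha/2\le\beta<\alpha$.

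I expect the main obstacle to be not the algebra but the rigorous justification of the fractional-calculus manipulations for Banach-space-valued functions: verifying that $u$ and $D_t^\beta u$ lie in the Sobolev-type classes $W^{m,1}$ required for the Caputo derivatives to be defined, and that the convolution identities $J_t^\mu J_t^\nu=J_t^{\mu+\nu}$ and $D_t^\gamma J_t^\gamma=\mathrm{id}$ hold at the needed orders. The key enabler is the continuity of $A^2u$ supplied by Theorem~\ref{thmab}: it guarantees that $J_t^\alpha A^2u$, $J_t^{\alpha-\beta}A^2u$, and hence $D_t^\beta u$ have exactly the regularity (and the right behaviour at $t=0$) that makes each application of a Caputo derivative legitimate, and that $A^2$ commutes past the scalar fractional integrals by closedness.
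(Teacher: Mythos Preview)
Your approach is correct and genuinely different from the paper's. The paper does \emph{not} reduce to Theorem~\ref{thmab}; instead it works componentwise with the two resolvent families $S_{\alpha/2}^\pm$. For $\phi\in D(A^2)$ it iterates the resolvent equation to obtain
\[
S_{\alpha/2}^\pm(t)\phi=\phi\pm g_{\alpha/2+1}(t)A\phi+(g_\alpha*S_{\alpha/2}^\pm)(t)A^2\phi,
\]
applies $D_t^\beta$ term by term, and then sums so that the middle terms $\pm g_{\alpha/2+1-\beta}(t)A\phi$ cancel, leaving $D_t^\beta(S_{\alpha/2}^++S_{\alpha/2}^-)\phi=g_{\alpha-\beta}*(S_{\alpha/2}^++S_{\alpha/2}^-)A^2\phi$, after which a second differentiation of order $\alpha-\beta$ yields the equation. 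The $\psi$ part is handled by an analogous explicit computation using $D_t^\beta J_t^\beta S_{\alpha/2}^\pm\psi=S_{\alpha/2}^\pm\psi$ together with the identity $A^2(J_t^\beta S_\alpha)\psi=\tfrac12 J_t^{\beta-\alpha/2}(S_{\alpha/2}^+-S_{\alpha/2}^-)A\psi$ from the proof of Theorem~\ref{thmab}.

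Your route is shorter and more conceptual: once Theorem~\ref{thmab} supplies $u(t)\in D(A^2)$ with $A^2u$ continuous and the identity $u=\phi+g_{\beta+1}\psi+J_t^\alpha A^2u$, two fractional differentiations finish the job without ever splitting into $\pm$ pieces. The paper's approach, by contrast, buys explicit term-by-term formulas for $D_t^\beta S_{\alpha/2}^\pm\phi$ (useful if one wants the individual components) and makes the cancellation mechanism visible. Both proofs face the same $W^{m,1}$-regularity verification when $\beta>1$ (the paper asserts ``$\beta$-order differentiable'' without further detail), and your observation that the endpoint $\beta=\alpha$ is delicate applies equally to the paper's argument: there the step $D_t^{\alpha-\beta}\psi=0$ likewise requires $\alpha-\beta>0$.
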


\begin{proof}
Note that for $\phi \in D(A^2)$ by applying the resolvent equation (\ref{A-resolventequation}) twice we have
\begin{eqnarray*}
S_{\alpha/2}^\pm(t) \phi &=& \phi \pm (g_{\alpha/2}* S_{\alpha/2}^\pm)(t)A \phi \\
&=& \phi \pm (g_{\alpha/2}*1) (t)A\phi + (g_{\alpha/2}* g_{\alpha/2}* S_{\alpha/2}^\pm)(t)A^2 \phi\\
&=&\phi \pm g_{\alpha/2+1} (t)A\phi + (g_{\alpha}* S_{\alpha/2}^\pm)(t)A^2 \phi,
\end{eqnarray*}
thus $S_{\alpha/2}^\pm(t) \phi$ are $\beta$-order differentiable and
$$
D_t^\beta S_{\alpha/2}^\pm (t) \phi = \pm g_{\alpha/2+1-\beta} (t)A\phi + (g_{\alpha-\beta}* S_{\alpha/2}^\pm)(t)A^2 \phi .
$$
By summing the above two identities we get
$$
D_t^\beta (S_{\alpha/2}^+ (t) \phi +S_{\alpha/2}^- (t) \phi) = g_{\alpha-\beta}*(S_{\alpha/2}^+ +  S_{\alpha/2}^-)(t)A^2 \phi ,
$$
from which it follows that $D_t^\beta (S_{\alpha/2}^+ (t) \phi +S_{\alpha/2}^- (t) \phi)$ is $(\alpha-\beta)$-order differentiable and
\begin{equation}\label{Eq1}
\begin{split}
D_t^{\alpha - \beta} D_t^\beta (S_{\alpha/2}^+ (t) \phi +S_{\alpha/2}^- (t) \phi) &= (S_{\alpha/2}^+ +  S_{\alpha/2}^-)(t)A^2 \phi \\
&= A^2(S_{\alpha/2}^+ (t) \phi+  S_{\alpha/2}^-(t)\phi) ,
\end{split}
\end{equation}
since $A$ commutes with $S_{\alpha/2}^\pm (t)$. For $\psi \in D(A)$, as in the proof of Theorem \ref{thmab}
% noting that $\beta \ge \alpha/2$, by the resolvent equation (\ref{A-resolventequation}),
%\begin{eqnarray*}
%A (J_t^\beta S_{\alpha/2}^+(t) \psi + J_t^\beta S_{\alpha/2}^-(t) \psi)&=& J_t^\beta S_{\alpha/2}^+(t) A\psi + J_t^\beta S_{\alpha/2}^-(t) A\psi \\
%&=& J_t^{\beta-\alpha/2}(J_t^{\alpha/2} S_{\alpha/2}^+(t) A\psi + J_t^{\alpha/2} S_{\alpha/2}^-(t) A\psi)\\
%&=& J_t^{\beta-\alpha/2}(S_{\alpha/2}^+(t)\psi -\psi + \psi-  S_{\alpha/2}^-(t) \psi)\\
%&=& J_t^{\beta-\alpha/2}(S_{\alpha/2}^+(t)\psi-  S_{\alpha/2}^-(t) \psi)
%\end{eqnarray*}
we have $(J_t^\beta S_{\alpha/2}^+(t) \psi + J_t^\beta S_{\alpha/2}^-(t) \psi) \in D(A^2)$  and
\begin{equation}\label{Eq2}
A^2 (J_t^\beta S_{\alpha/2}^+(t) \psi + J_t^\beta S_{\alpha/2}^-(t) \psi) = J_t^{\beta-\alpha/2}(S_{\alpha/2}^+(t)A\psi-  S_{\alpha/2}^-(t) A\psi).
\end{equation}
On the other hand, since
$$
J_t^\beta S_{\alpha/2}^\pm (t) \psi = g_{\beta+1}(t) \psi \pm J_t^{\beta+ \alpha/2}S_{\alpha/2}^+(t) A\psi,
$$
we have
$$
D_t^\beta J_t^\beta S_{\alpha/2}^\pm (t) \psi = \psi \pm J_t^{\alpha/2}S_{\alpha/2}^+(t) A\psi = S_{\alpha/2}^\pm \psi,
$$
and thus
\begin{equation}\label{Eq3}
D_t^{\alpha - \beta}D_t^\beta (J_t^\beta S_{\alpha/2}^+ (t) \psi +J_t^\beta S_{\alpha/2}^- (t) \psi)= J_t^{\beta-\alpha/2}(S_{\alpha/2}^+(t)A\psi-  S_{\alpha/2}^-(t) A\psi).
\end{equation}
Combining (\ref{Eq1}), (\ref{Eq2}) and (\ref{Eq3}) we have proven that $u(t)$ defined by (\ref{alphabetasolution}) is a strong solution of (\ref{two-equation2}), where
the initial conditions are easy to check.
\end{proof}

\begin{rem}\label{difference}
As a direct consequence, we have  the following sequential fractional differential equation corresponding to (\ref{Fujitaabstract}):
\begin{equation*}
\begin{cases}
D_t^{\alpha/2} D_t^{\alpha/2} u(t) = A^2 u(t), \quad t>0\\
u(0)= \phi, \quad D_t^{\alpha/2}u(0)= \psi.
\end{cases}
\end{equation*}
The solution can be decomposed as half of the sum of the solutions to
\begin{equation*}
\begin{cases}
D_t^{\alpha/2}u^+(t) = A u^+(t)+ \psi, \quad t>0\\
u^+(0)= \phi,
\end{cases}
\end{equation*}
and
\begin{equation*}
\begin{cases}
D_t^{\alpha/2}u^-(t) = -A u^-(t)+ \psi, \quad t>0\\
u^-(0)= \phi.
\end{cases}
\end{equation*}
In case of $A = \partial/\partial x$ and $\alpha=2$, this means that we can decompose the
solution of the traditional wave equation (\ref{wave-equation}) into the half sum of solutions to the following two first-order nonhomogeneous equation
\begin{equation*}
\begin{cases}
u_t^+(t,x)=u_x^+(t,x) + \psi(x)\\
u^+_0(t,x) = \phi(x)
\end{cases}
\mbox{ and  }
\begin{cases}
u_t^-(t,x)=-u_x^-(t,x) + \psi(x)\\
u_0^-(t,x) = \phi(x).
\end{cases}
\end{equation*}
In other words, Fujita's decomposition means that one can decompose the solution of the wave equation into the half sum of
$$
u^+(x,t) = \phi(x+t)+ \int_0^{x+t}\psi(y)dy\quad \mbox{  and  }\quad u^-(x,t) = \phi(x-t)- \int_0^{x-t}\psi(y)dy;
$$
while our decomposition is
$$
u^+(x,t) = \phi(x+t)+ \int_x^{x+t}\psi(y)dy\quad \mbox{  and  }\quad u^-(x,t) = \phi(x-t)+ \int_{x-t}^{x}\psi(y)dy.
$$
\end{rem}
\mbox{}\\
%\begin{proof}
% From the proof of Theorem \ref{Vfdecom} we know that the solution of (\ref{Fujitaabstract}) is given by
 %$$
 %u(t) = S_\alpha(t)\phi + \int_0^t S_\alpha(t-s) \frac{s^{\frac{\alpha}{2}-1}}{\Gamma(\frac{\alpha}{2})}\psi ds.
 %$$
% Since $S_\alpha(t) = \frac{1}{2}[S_{\alpha/2}^+(t)  + S_{\alpha/2}^- (t)]$, (\ref{Fdalembert}) follows immediately.
%\end{proof}

%As a direct consequence of the Lemma \ref{decom}, we have the following result:
%\begin{thm}
%Let $0< \alpha \le 2$ and $\phi \in X$. If the two $\alpha/2$-order Cauchy problems
%\begin{equation}\label{0+equation}
%\begin{cases}
%&D_t^{\alpha/2}u(t) = Au(t), \quad t>0\\
%&u(0)= \phi
%\end{cases}
%\end{equation}
%and
%\begin{equation}\label{0-equation}
%\begin{cases}
%&D_t^{\alpha/2}u(t) = -Au(t), \quad t>0\\
%&u(0)= \phi
%\end{cases}
%\end{equation}
%are well-posed, then the $\alpha$-order Cauchy problem
%\begin{equation}\label{alpha-equation}
%\begin{cases}
%&D_t^\alpha u(t) = A^2u(t), \quad t>0\\
%&u(0)= \phi, \quad (u_t(0)= 0 \mbox{ if } 1<\alpha \le 2)
%\end{cases}
%\end{equation}
%is also well-posed. Moreover, the unique mild solution $u_\alpha$ of $(\ref{alpha-equation})$ is given by
%$$
%u_\alpha(t) = \frac{1}{2}[u_{\alpha/2}^+(t) + u_{\alpha/2}^-(t)],
%$$
%where $u_{\alpha/2}^+(t)$ and $u_{\alpha/2}^-(t)$ are the solutions of $(\ref{0+equation})$ and $(\ref{0-equation})$ respectively.
%\end{thm}

Finally in this section, let us consider the Riemann-Liouville fractional differential equation
\begin{equation}\label{RL-equation}
\begin{cases}
{}_{RL}D_t^{\alpha}u(t) = A^2 u(t), \quad t>0\\
(g_{2-\alpha}*u)(0)= 0,\quad  (g_{2-\alpha} *u)'(0)= \psi
\end{cases}
\end{equation}
with $1< \alpha \le 2$. By integration with respect to $t$ for $\alpha$-times, we get the corresponding integro-differential equation as follows
$$
u(t) = \frac{t^{\alpha-1}}{\Gamma(\alpha)}\psi + \frac{1}{\Gamma(\alpha)}\int_0^t (t-s)^{\alpha-1}A^2u(s)ds.
$$
If $A^2$ generates an $\alpha$-times resolvent family $S_\alpha(t)$, then the solution $u$ is given by $J_t^{\alpha-1}S_\alpha(t)\psi$ \cite{LiPeng}. Indeed, it is easy to verify that
\begin{eqnarray*}
u(t)&= & J_t^{\alpha-1}S_\alpha(t)\psi\\
&=& J_t^{\alpha-1}(\psi + A^2(g_\alpha * S_\alpha)\psi) \\
&=& g_\alpha(t) \psi + A^2(g_\alpha* J_t^{\alpha-1}S_\alpha)\psi\\
&=& g_\alpha(t) \psi + A^2(g_\alpha*u)(t).
\end{eqnarray*}
On the other hand,
$$
u(t)= J_t^{\alpha-1}S_\alpha(t)\psi = \frac{1}{2}J_t^{\alpha-1} (S_{\alpha/2}^+(t) + S_{\alpha/2}^-(t))\psi
$$
where $J_t^{\alpha-1} S_{\alpha/2}^+(t) \psi$ and $J_t^{\alpha-1}  S_{\alpha/2}^-(t)\psi$ are solutions of
$$
v(t) = \frac{t^{\alpha-1}}{\Gamma(\alpha)}\psi + \frac{1}{\Gamma(\alpha/2)}\int_0^t (t-s)^{\alpha/2-1}A v(s)ds.
$$
and
$$
v(t) = \frac{t^{\alpha-1}}{\Gamma(\alpha)}\psi - \frac{1}{\Gamma(\alpha/2)}\int_0^t (t-s)^{\alpha/2-1}Av (s)ds.
$$
respectively, and the corresponding Riemann-Liouville fractional equations are
\begin{equation}\label{RL-equation+}
\begin{cases}
{}_{RL}D_t^{\alpha/2}v(t) = A v(t) + \frac{t^{\alpha/2-1}}{\Gamma(\alpha/2)}\psi, \quad t>0\\
(g_{1-\alpha/2}*v)(0)= 0
\end{cases}
\end{equation}
and
\begin{equation}\label{RL-equation-}
\begin{cases}
{}_{RL}D_t^{\alpha/2}v(t) = - A v(t) + \frac{t^{\alpha/2-1}}{\Gamma(\alpha/2)}\psi, \quad t>0\\
(g_{1-\alpha/2}*v)(0)= 0
\end{cases}
\end{equation}
respectively.

In summary, we have
\begin{thm}
Let $1< \alpha \le 2$. If both $A$ and $-A$ generate $\alpha/2$-times resolvent families $S_{\alpha/2}^+(t)$ and $S_{\alpha/2}^-(t)$ on $X$, respectively, then the unique mild solution of the $\alpha$-order
Riemann-Liouville fractional Cauchy problem (\ref{RL-equation}) with $\psi \in X$
is given by
\begin{equation}
u_\alpha(t) = \frac{1}{2}(J_t^{\alpha-1} S_{\alpha/2}^+(t)\psi + J_t^{\alpha-1}  S_{\alpha/2}^-(t)\psi)=: \frac{1}{2}(u_{\alpha/2}^+(t)
+u_{\alpha/2}^-(t)),
\end{equation}
where $u_{\alpha/2}^+(t)$ and $u_{\alpha/2}^-(t)$ are mild solution to (\ref{RL-equation+}) and (\ref{RL-equation-}), respectively.
\end{thm}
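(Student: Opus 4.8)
The plan is to reduce the Riemann--Liouville Cauchy problem (\ref{RL-equation}) to the scalar Volterra equation that has already been solved in the discussion above, and then read off the decomposition from Lemma \ref{decom}. First I would apply the Riemann--Liouville integral $J_t^\alpha$ to both sides of ${}_{RL}D_t^\alpha u = A^2 u$ and use the two data $(g_{2-\alpha}*u)(0)=0$ and $(g_{2-\alpha}*u)'(0)=\psi$ to absorb the boundary terms; since $1<\alpha\le 2$ one has ${}_{RL}D_t^\alpha = D_t^2 J_t^{2-\alpha}$, and this produces exactly the integro-differential equation $u(t)=g_\alpha(t)\psi+A^2(g_\alpha*u)(t)$ displayed before the theorem. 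Conversely, applying ${}_{RL}D_t^\alpha$ to this Volterra equation recovers (\ref{RL-equation}) together with its initial conditions (using $(g_{2-\alpha}*g_\alpha)=g_2$ and $g_2'(0)=0$, $g_2''=0$), so the two formulations are equivalent and a mild solution of one is a mild solution of the other.

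Next I would settle existence and uniqueness. By hypothesis $A$ and $-A$ generate $\alpha/2$-times resolvent families, so Lemma \ref{decom} gives that $A^2$ generates the $\alpha$-times resolvent family $S_\alpha=\frac{1}{2}(S_{\alpha/2}^++S_{\alpha/2}^-)$; in particular the Volterra equation is well-posed, which yields uniqueness of the mild solution. For existence I would verify directly, exactly as in the computation preceding the theorem, that $u_\alpha(t):=J_t^{\alpha-1}S_\alpha(t)\psi$ satisfies $u_\alpha=g_\alpha\psi+A^2(g_\alpha*u_\alpha)$. This uses the resolvent identity (\ref{X-resolventequation}), the fact that $J_t^{\alpha-1}$ applied to the constant $\psi$ equals $g_\alpha(t)\psi$, and the commutation of $J_t^{\alpha-1}$ with $A$ (closedness, Remark \ref{remfirst}) and with the convolution $g_\alpha*$ (semigroup property of fractional integrals).

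Finally I would obtain the decomposition by substituting $S_\alpha=\frac{1}{2}(S_{\alpha/2}^++S_{\alpha/2}^-)$ into $u_\alpha=J_t^{\alpha-1}S_\alpha\psi$, giving $u_\alpha=\frac{1}{2}(u_{\alpha/2}^++u_{\alpha/2}^-)$ with $u_{\alpha/2}^\pm=J_t^{\alpha-1}S_{\alpha/2}^\pm(t)\psi$. To identify each half I would rerun the same verification with the generators $\pm A$: applying (\ref{X-resolventequation}) for $\pm A$ gives $u_{\alpha/2}^\pm=g_\alpha\psi\pm A(g_{\alpha/2}*u_{\alpha/2}^\pm)$, which are precisely the two scalar Volterra equations listed before the theorem, and hence their mild solutions. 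Differentiating these $\alpha/2$ times in the Riemann--Liouville sense, using ${}_{RL}D_t^{\alpha/2}g_\alpha=g_{\alpha/2}$ and ${}_{RL}D_t^{\alpha/2}(g_{\alpha/2}*\,\cdot\,)=\frac{d}{dt}(g_1*\,\cdot\,)=\mathrm{id}$, converts them into (\ref{RL-equation+}) and (\ref{RL-equation-}); the conditions $(g_{1-\alpha/2}*u_{\alpha/2}^\pm)(0)=0$ follow since $g_{1+\alpha/2}(0)=0$ and $(g_1*u_{\alpha/2}^\pm)(0)=0$.

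The convolution bookkeeping is routine. The one place deserving care is the equivalence step in the first paragraph, where the nonzero datum $(g_{2-\alpha}*u)'(0)=\psi$ must be matched with the forcing term $g_\alpha(t)\psi$ under $J_t^\alpha$, and where one must check that the mild solution produced has enough regularity for $g_{2-\alpha}*u$ to be differentiable at $0$ so that the Riemann--Liouville initial conditions are meaningful. This is the main, though modest, obstacle; everything else is a direct application of Lemma \ref{decom} and the resolvent identity.
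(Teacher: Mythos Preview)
Your proposal is correct and follows essentially the same route as the paper: the paper's ``proof'' is the discussion immediately preceding the theorem statement, which converts (\ref{RL-equation}) into the Volterra equation $u=g_\alpha\psi+A^2(g_\alpha*u)$, invokes Lemma~\ref{decom} to obtain $S_\alpha=\tfrac12(S_{\alpha/2}^++S_{\alpha/2}^-)$, verifies $u_\alpha=J_t^{\alpha-1}S_\alpha\psi$ via the resolvent identity, and then reads off the decomposition. Your write-up is simply a more careful version of that argument, with the equivalence step and the initial conditions for $u_{\alpha/2}^\pm$ spelled out explicitly.
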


\section{Examples}
In this section, we will illustrate the interpretation and application of our abstract setting
with several examples. The inverse of a standard stable subordinator will play an important role in our examples,
therefore we will give a brief of the connection with
time-fractional differential equation for reader's convenience. See \cite{meerschaert2012book,meerschaert2013} for more details.

%We will use $Af(x)=f^\prime (x)$ in {\bf Examples 3.1, 3.2} and {\bf 3.3}, while
%$Af(x)=f(x-1)-f(x)$ in {\bf Examples 3.4} and {\bf 3.5}.

Throughout this section, we define $S_t$ as the following processes on $\{x:x\geq0\}$ for $0<\alpha\leq2$:
\begin{itemize}
\item[(i)]
 When $0<\alpha<2$, $S_t$ is a standard $\alpha/2$-stable subordinator, a L\'evy process such that ${\bf E} \exp\{-\lambda S_t\}= \exp\{-t \lambda^{\alpha/2}\}$;
\item[(ii)]
 When $\alpha=2$, $S_t$ is a deterministic continuous process with uniform velocity $1$, starting from $0$. Indeed, $S_t$
in this case can be regarded as a degenerate stable subordinator with density function $\delta(x-t)$ and ${\bf E} \exp\{-\lambda S_t\}= \exp\{-t \lambda\}$. For unity, we will use $1$-stable subordinator to denote $S_t$ for $\alpha=2$.
\end{itemize}
Then the inverse $\alpha/2$-stable subordinator $Y_{\alpha/2}(t)=\inf\{u>0:S_u>t\}$ is a continuous, nondecreasing and nonnegative process
with $Y_{\alpha/2}(0)=0$, and
$$
{\bf E} \exp\{-s Y_{\alpha/2} (t)\}= E_{\alpha/2}(-st^{\alpha/2}),
$$
where $E_\alpha(t) = \sum_{n=0}^\infty \frac{t^n}{\Gamma(\alpha n +1)}$ is the Mittag-Leffler function.
Denote the probability density function of $Y_{\alpha/2}(t)$ by $\varphi_{\alpha/2}(t,\cdot)$, then we have
\begin{equation}\label{stablepdf}
\int_0^\infty e^{-\lambda t}\varphi_{\alpha/2} (t,s)dt= \lambda^{{\alpha\over 2} -1} e^{-s \lambda^{\alpha/2}} .
\end{equation}

From the above identity, it is not hard to
show that for a suitable function $f$ the following holds for $\lambda$ large enough:
\begin{equation}\label{Yalpha}
\int_0^\infty e^{-\lambda t} {\bf E} [f(Y_{\alpha/2}(t))] dt = \lambda^{{\alpha \over 2} -1} \int_0^\infty f(s) e^{-s\lambda^{\alpha/2}} ds.
\end{equation}
On the other hand, since the first-order derivative operator $\partial_x f(t,x) = f_x(t,x)$ generates a $C_0$-semigroup given by
$T(t)\phi(x) = \phi(x+t)$, by the subordination principle for fractional resolvent family \cite{Baj}, for $1<\alpha <2$, the $\alpha/2$-times resolvent family $S_{\alpha/2}^+(t)$ is given by
$$
S_{\alpha/2}^+(t)\phi(x) = \int_0^\infty \varphi_{\alpha/2}(t,s) T(s)\phi(x) ds=  \int_0^\infty \varphi_{\alpha/2}(t,s)\phi(s+x) ds,
$$
It follows that
$$
\int_0^\infty e^{-\lambda t} S_{\alpha/2}^+(t)\phi(x)dt = \lambda^{{\alpha \over 2} -1} \int_0^\infty \phi(s+x) e^{-s\lambda^{\alpha/2}} ds
$$
from (\ref{stablepdf}).

Comparing with (\ref{Yalpha}) and using the uniqueness of the Laplace transform, we have
\begin{equation}\label{EYalpha+}
 S_{\alpha/2}^+(t)\phi(x) = {\bf E} [\phi(x+Y_{\alpha/2}(t))].
\end{equation}
This is to say that ${\bf E} [\phi(x+Y_{\alpha/2}(t))]$ gives the unique solution to the fractional differential equation
\begin{equation}\label{negative FDE of inverse subordintor}
\begin{cases}
D_t^{\alpha/2}u(t,x) = u_x(t,x), \quad t>0, \quad x\in\Real \\
u(0,x)= \phi(x).
\end{cases}
\end{equation}
Similarly,  the unique solution to the fractional differential equation
\begin{equation}\label{FDE of inverse subordintor}
\begin{cases}
D_t^{\alpha/2}u(t,x) = -u_x(t,x), \quad t>0, \quad x\in\Real \\
u(0,x)= \phi(x)
\end{cases}
\end{equation}
is given by
\begin{equation}\label{EYalpha-}
 S_{\alpha/2}^-(t)\phi(x) = {\bf E} [\phi(x-Y_{\alpha/2}(t))].
\end{equation}
The fundamental solution to equation (\ref{FDE of inverse subordintor})
is the probability  density function of an inverse $\alpha/2$-stable subordinator. Meanwhile, the fundamental solution to
equation (\ref{negative FDE of inverse subordintor})
is the reflected density ($x\mapsto -x$) of an inverse $\alpha/2$-stable subordinator.
%The stable subordinator is a stable L{\'e}vy process with non-decreasing path,
%while the inverse of stable subordinator is certainly non-decreasing.

%In \textbf{Subsection 3.1}, we will consider the stochastic process interpretation for the d'Alembert solution
%to
%\begin{equation}
%u(t,x)= \phi(x) + \frac{1}{\Gamma(\alpha)}\int_0^t(t-s)^{\alpha-1}\partial_x^2u(s,x)ds
%\end{equation}

%Then we will study the stochastic interpretation for
%\begin{equation}
%u(t,x)= \phi(x) + \frac{t^{\beta}}{\Gamma(1+\beta)}\psi(x) + \frac{1}{\Gamma(\alpha)}\int_0^t(t-s)^{\alpha-1}\partial_x^2u(s,x)ds
%\end{equation}
%in \textbf{Subsection 3.2}.

%Finally, we will compare equation
%\begin{equation}
%u(t,x)= \phi(x) + \frac{t^{\alpha/2}}{\Gamma(1+{\alpha\over 2})}\psi(x) + \frac{1}{\Gamma(\alpha)}\int_0^t(t-s)^{\alpha-1}\partial_x^2u(s,x)ds
%\end{equation}
%with
%\begin{equation}
%v(t,x)= \phi(x) + t\psi(x) + \frac{1}{\Gamma(\alpha)}\int_0^t(t-s)^{\alpha-1}\partial_x^2v(s,x)ds.
%\end{equation}
%in \textbf{Subsection 3.3}.

\begin{exm}\label{exm1}

%There are at least three nice stochastic processes can be used to subordinate. The first is the $Y_{\alpha/2} (t)$ in \cite{Fujita}.
%See Meerschaert's paper for the other stochastic processes. I will try to explain the d'Alembert solutions further.

Let $1<\alpha\leq 2$. By Theorem \ref{abFujita} and the above analysis, the solution of
\begin{equation}\label{exm1equation}
u(t,x)= \phi(x) + \frac{t^{\alpha/2}}{\Gamma(1+ \alpha/2)}\psi(x)+\frac{1}{\Gamma(\alpha)}\int_0^t(t-s)^{\alpha-1}u_{xx}(s,x)ds
\end{equation}
or
\begin{equation}\label{3.3}
\begin{cases}
D_t^{\alpha/2}D_t^{\alpha/2}u(t,x) =  u_{xx}(t,x), \quad t>0, \quad x\in\Real \\
u(0,x)= \phi(x), \quad D_t^{\alpha/2}u(0,x)=\psi(x)
\end{cases}
\end{equation}
can be represented as
\begin{equation}\label{erp}
u(t,x) = \frac{1}{2}{\bf E}[\phi(x+ Y_{\alpha/2}(t)) + \phi(x-Y_{\alpha/2}(t))] + \frac{1}{2}{\bf E} \int_{x-Y_{\alpha/2}(t)}^{x+Y_{\alpha/2}(t)} \psi(y)dy.
\end{equation}
This is the same as Fujita's formula (\ref{Fujitaformula}). Thus the motion (or the evolution) of $u(t,x)$ governed by equation (\ref{3.3}) is mixed with two
components: positive direction and negative direction on line $\Real$.
The solution of
\begin{equation*}
\begin{cases}
D_t^{\alpha/2}u(t,x) = u_x(t,x), \quad t>0, \quad x\in\Real \\
u(0,x)= \phi(x) + \int_0^x \psi(y)dy
\end{cases}
\end{equation*}
 gives the motion on positive direction,
while the solution of equation
\begin{equation*}
\begin{cases}
D_t^{\alpha/2}u(t,x) = -u_x(t,x), \quad t>0, \quad x\in\Real \\
u(0,x)= \phi(x) - \int_0^x \psi(y)dy
\end{cases}
\end{equation*}
 gives the motion on negative direction.
\end{exm}

The above interpretation, adapted from \cite{meerschaert}, is consistent with the classical d'Alembert formula for the one-dimensional
wave equation on line: decompose the wave equation into two transport equations with positive direction and negative direction.
Moreover, the fractional d'Alembert formula indicates the `random alternative of direction' or `wander
in positive or negative direction' for the motion governed by one dimensional fractional wave equation \cite{gorenflo2015}.

\begin{exm}\label{exm2}
Here we consider the following integro-differential equation
\begin{equation}\label{general beta}
u(t,x)= \phi(x) + \frac{t^{\beta}}{\Gamma(1+\beta)}\psi(x) + \frac{1}{\Gamma(\alpha)}\int_0^t(t-s)^{\alpha-1}u_{xx}(s,x)ds
\end{equation}
and the corresponding fractional Cauchy problem
\begin{equation}
\begin{cases}
D_t^{\alpha-\beta} D_t^{\beta} u(t,x) = u_{xx}(t,x), \quad t>0 \\
u(0,x)= \phi(x), \quad D_t^{\beta}u(0,x)= \psi(x)
\end{cases}
\end{equation}
where $\alpha/2 \leq \beta \leq \alpha$.
%(Note: the restriction $\alpha/2 \leq \beta \leq \alpha$ is from equation (5.2) and $g_{\beta-\frac{\alpha}{2}}\ast f$ in the following computation.)
We will explore the connection between their fractional d'Alembert solution and its stochastic interpretation.

To start with, we first claim the following formula holds:
\begin{equation}\label{fractional integral of PDF}
(g_{\beta}\ast f_{\alpha/2})(t,y)=\int_y^{+\infty}(g_{\beta-\alpha/2}\ast f_{\alpha/2})(t,z)dz,  \quad t,y>0.
\end{equation}
where $f_{\alpha/2}(t,y)$ is the probability density function of the inverse $\alpha/2$-stable subordinator $Y_{\alpha/2}(t)$.
By the properties of the inverse stable subordinator \cite{meerschaert2012book},
 the Laplace transform on variable $t$ of the left side is
$$
\widehat{g_{\beta}}(\lambda)\widehat{f_{\alpha/2}}(\lambda,y)=\lambda^{-\beta}\cdot \lambda^{\alpha/2-1}e^{-\lambda^{\alpha/2}y};
$$
on the other hand, the Laplace transform on variable $t$ of the right side is
\begin{eqnarray*}
&&\int_0^{+\infty}e^{-\lambda t}dt \int_y^{+\infty}(g_{\beta-\alpha/2}\ast f_{\alpha/2})(t,z)dz \\
&=&  \int_y^{+\infty} dz \int_0^{+\infty}e^{-\lambda t} (g_{\beta-\alpha/2}\ast f_{\alpha/2})(t,z) dt\\
&=&  \int_y^{+\infty} \lambda^{-\beta+\alpha/2}\cdot \lambda^{\alpha/2-1}e^{-\lambda^{\alpha/2}z} dz \\
%&=& \lambda^{-\beta}\cdot \lambda^{\frac{\alpha}{2}-1} \big[ -e^{-\lambda^{\alpha/2}z} \big]\big|_{z=y}^{z=\infty} \\
&=& \lambda^{-\beta}\cdot \lambda^{\alpha/2-1}e^{-\lambda^{\alpha/2}y}.
\end{eqnarray*}
Therefore we draw the conclusion by the uniqueness of the Laplace transform.

Also, the equation (\ref{fractional integral of PDF}) is formally equivalent to
$$
D^{\alpha/2}_t(g_{\beta-\alpha/2}\ast f_{\alpha/2})(t,y)=-\frac{d}{dy}(g_{\beta-\alpha/2}\ast f_{\alpha/2})(t,y),
$$
which reduces to the governing equation for the inverse $\alpha/2$-stable subordinator when $\beta=\alpha/2$.
The term $(g_{\beta-\alpha/2}\ast f_{\alpha/2})(t,y) $ is not probability density if $\beta\neq \alpha/2$.
However, for any $\alpha/2\leq \beta\leq \alpha$, it is easy to see $(g_{\beta-\alpha/2}\ast f_{\alpha/2})(t,y) $ is positive and could be normalized as
$(g_{1+\beta-\alpha/2}(t))^{-1}(g_{\beta-\alpha/2}\ast f_{\alpha/2})(t,y)$,
i.e., $\int_0^\infty (g_{1+\beta-\alpha/2}(t))^{-1}(g_{\beta-\alpha/2}\ast f_{\alpha/2})(t,y)dy=1$. We denote the corresponding random variables
by $H_{\beta}(t)$, and note that $H_{\beta}(0)=0$ almost surely.
%This can be shown as follows:
%\begin{eqnarray*}
%&&\int_0^\infty (g_{1+\beta-\frac{\alpha}{2}}(t))^{-1}(g_{\beta-\frac{\alpha}{2}}\ast f)(t,y)dy \\
%&=& \int_0^\infty (g_{1+\beta-\frac{\alpha}{2}}(t))^{-1}\int_0^t g_{\beta-\frac{\alpha}{2}}(t-s) f(s,y)dsdy \\
%&=& (g_{1+\beta-\frac{\alpha}{2}}(t))^{-1}\int_0^t \int_0^\infty g_{\beta-\frac{\alpha}{2}}(t-s) f(s,y)dyds \\
%&=& (g_{1+\beta-\frac{\alpha}{2}}(t))^{-1}\int_0^t  g_{\beta-\frac{\alpha}{2}}(t-s) ds \\
%&=& (g_{1+\beta-\frac{\alpha}{2}}(t))^{-1}g_{1+\beta-\frac{\alpha}{2}}(t)\\
%&=&1
%\end{eqnarray*}
%for all $t\geq0$. And $(g_{1+\beta-\frac{\alpha}{2}}(t))^{-1}(g_{\beta-\frac{\alpha}{2}}\ast f)(t,y)$
%is clearly positive.

Let $S_{\alpha/2}^+(t)$ and $S_{\alpha/2}^-(t)$ be given by (\ref{EYalpha+}) and (\ref{EYalpha-}), respectively.
By Theorem \ref{thmab}, the solution of (\ref{general beta}) can be represented as
$$
u(t,x)= \frac{1}{2}(S_{\alpha/2}^+(t) + S_{\alpha/2}^-(t))\phi(x)
  + \frac{1}{2}(J_t^\beta S_{\alpha/2}^+(t)\psi(x) + J_t^\beta S_{\alpha/2}^-(t) \psi(x)).
$$
By formula (\ref{fractional integral of PDF}) we have
\begin{eqnarray*}
J_t^\beta S_{\alpha/2}^+(t)\psi(x)
&=&\int_0^t g_\beta(t-s) {\bf E} [\psi(x+Y_{\alpha/2}(s))]ds\\
&=&\int_0^t g_\beta(t-s) \int_0^\infty \psi(x+y)f_{\alpha/2}(s,y)dyds \\
&=&\int_0^\infty  \psi(x+y)\int_0^t g_\beta(t-s)f_{\alpha/2}(s,y)dsdy\\
&=&\int_0^\infty  \psi(x+y)\int_y^{\infty}(g_{\beta-\alpha/2}\ast f_{\alpha/2})(t,z)dzdy \\
&=&\int_x^\infty \psi(y)dy \int_{y-x}^{\infty} (g_{\beta-\alpha/2}\ast f_{\alpha/2})(t,z) dz \\
&=&\int_0^\infty (g_{\beta-\alpha/2}\ast f_{\alpha/2})(t,z) dz \int_{x}^{x+z} \psi(y)dy  \\
%&=&\int_0^\infty \int_{0}^{x+z} \psi(y)dy (g_{\beta-\frac{\alpha}{2}}\ast f_{\alpha/2})(t,z) dz\\
%&&\quad - \int_0^\infty \int_{0}^{x} \psi(y)dy (g_{\beta-\frac{\alpha}{2}}\ast f_{\alpha/2})(t,z) dz \\
%&=&g_{1+\beta-\frac{\alpha}{2}}(t)\int_0^\infty \int_{0}^{x+z} \psi(y)dy \big [(g_{1+\beta-\frac{\alpha}{2}}(t))^{-1}(g_{\beta-\frac{\alpha}{2}}\ast f_{\alpha/2})(t,z) \big ]dz \\
%&&\quad - \int_0^\infty \int_{0}^{x} \psi(y)dy (g_{\beta-\frac{\alpha}{2}}\ast f_{\alpha/2})(t,z) dz\\
&=&g_{1+\beta-\alpha/2}(t){\bf E}[\Psi(x+H_{\beta}(t))]- \int_0^\infty \Psi(y) (g_{\beta-\alpha/2}\ast f_{\alpha/2})(t,z) dz
\end{eqnarray*}
where $\Psi(x) = \int_0^x \psi(y)dy$; similarly we have
$$
J_t^\beta S_{\alpha/2}^-(t)\psi(x)
%&=&\int_0^t g_\beta(t-s) {\bf E} [\psi(x-Y_{\alpha/2}(s))]ds\\
%&=&\int_0^\infty \int^{0}_{x-z} \psi(y)dy (g_{\beta-\frac{\alpha}{2}}\ast f_{\alpha/2})(t,z) dz
%+ \int_0^\infty \int_{0}^{x} \psi(y)dy (g_{\beta-\frac{\alpha}{2}}\ast f_{\alpha/2})(t,z) dz\\
=-g_{1+\beta-\alpha/2}(t){\bf E}[\Psi(x-H_{\beta}(t))]+ \int_0^\infty \Psi(x) (g_{\beta-\alpha/2}\ast f_{\alpha/2})(t,z) dz.
$$
Therefore,
\begin{eqnarray*}
u(t,x)
&=& \frac{1}{2}(S_{\alpha/2}^+(t) + S_{\alpha/2}^-(t))\phi(x)\\
&&\quad + \frac{1}{2} g_{1+\beta-\alpha/2}(t)({\bf E}[\Psi(x+H_{\beta}(t))]
     -{\bf E}[\Psi(x-H_{\beta}(t))] )\\
&=&\frac{1}{2}{\bf E}[\phi(x+ Y_{\alpha/2}(t)) + \phi(x-Y_{\alpha/2}(t))]+
   \frac{1}{2}g_{1+\beta-\alpha/2}(t) {\bf E}\int_{x-H_{\beta}(t)}^{x+H_{\beta}(t)}\psi(y)dy.
\end{eqnarray*}

Roughly speaking, the fractional d'Alembert's formula is essentially based on the properties of the square root of
$A^2f(x)=f^{\prime\prime}(x)$,
rather than the order of the initial values. The formula and stochastic interpretation are clean and elegant
when $\beta=\alpha/2$, because $H_{\beta}(t)=Y_{\alpha/2}(t)$ and $g_{1+\beta-\alpha/2}(t)=1$ in this case. The above formula is then consistent with the formula (\ref{Fujitaformula}) of
Fujita.
However, we are unable to clarify the interpretation for the other $ H_{\beta}(t)$.

\end{exm}

As a continuation of Example \ref{exm2}, we shall compare two special cases: $\beta=\alpha/2$ and $\beta=1$.
\begin{exm}\label{exm3}
The solution to (\ref{exm1equation})
%\begin{equation}
%u(t,x)= \phi(x) + \frac{t^{\alpha/2}}{\Gamma(1+{\alpha\over 2})}\psi(x) + \frac{1}{\Gamma(\alpha)}\int_0^t(t-s)^{\alpha-1}u_{xx}(s,x)ds.
%\end{equation}
%and differential equation
%\begin{equation}
%\begin{cases}
%D_t^{\alpha/2} D_t^{\alpha/2} u(t,x) = \partial_x^2u(t,x), \quad t>0 \\
%u(0,x)= \phi(x), \quad \lim_{t \to 0}D_t^{\alpha/2}u(t,x)= \psi(x)
%\end{cases}
%\end{equation}
%are equivalent formally. Their solution
could be represented in two different ways:
\begin{eqnarray*}
u(t,x)
=\frac{1}{2}{\bf E}[\phi(x+ Y_{\alpha/2}(t)) + \phi(x-Y_{\alpha/2}(t))] + \frac{1}{2}{\bf E} \int_{x-Y_{\alpha/2}(t)}^{x+Y_{\alpha/2}(t)} \psi(y)dy
\end{eqnarray*}
and
\begin{eqnarray*}
u(t,x)
&=& \frac{1}{2}{\bf E}[\phi(x+ Y_{\alpha/2}(t)) + \phi(x-Y_{\alpha/2}(t))]   \\
&& + \frac{1}{2}\big \{\frac{1}{\Gamma(\alpha/2)}\int_0^t(t-s)^{\alpha/2-1}{\bf E} [\psi(x+Y_{\alpha/2}(s))+\psi(x-Y_{\alpha/2}(s))]ds\big\}.
\end{eqnarray*}
%where
%$$
%{\bf E} \int_0^{x+Y_{\alpha/2}(t)} \psi(y)dy-\int_0^x \psi(y)dy=\frac{1}{\Gamma(\alpha/2)}\int_0^t(t-s)^{\alpha/2-1}{\bf E} [\psi(x+Y_{\alpha/2}(s))]ds
%$$
%and
%$$
%{\bf E} \int_{x-Y_{\alpha/2}(t)}^0 \psi(y)dy+\int_0^x \psi(y)dy=\frac{1}{\Gamma(\alpha/2)}\int_0^t(t-s)^{\alpha/2-1}{\bf E} [\psi(x-Y_{\alpha/2}(s))]ds
%$$
%by Example \ref{exm2} and Theorem \ref{thmab},
The solution to the integro-differential equation
\begin{equation}
v(t,x)= \phi(x) + t\psi(x) + \frac{1}{\Gamma(\alpha)}\int_0^t(t-s)^{\alpha-1}v_{xx}(s,x)ds.
\end{equation}
%and differential equation
%\begin{equation}
%\begin{cases}
%D_t^{\alpha}  v(t,x) = \partial_x^2v(t,x), \quad t>0 \\
%v(0,x)= \phi(x), \quad v_t'(0,x)= \psi(x)
%\end{cases}
%\end{equation}
%are equivalent formally. Their solution
could be represented in two different ways:
\begin{eqnarray*}
v(t,x)
= \frac{1}{2}{\bf E}[\phi(x+ Y_{\alpha/2}(t)) + \phi(x-Y_{\alpha/2}(t))]
+\frac{1}{2}g_{2-\alpha/2}(t){\bf E} \int_{x-H_{1}(t)}^{x+H_1(t)} \psi(y)dy
\end{eqnarray*}
and
\begin{eqnarray*}
v(t,x)
&=& \frac{1}{2}{\bf E}[\phi(x+ Y_{\alpha/2}(t)) + \phi(x-Y_{\alpha/2}(t))] \\
    &&+ \frac{1}{2}\int_0^t{\bf E} [\psi(x+Y_{\alpha/2}(s))+\psi(x-Y_{\alpha/2}(s))]ds.
\end{eqnarray*}

If a more compact form for d'Alembert's solution is needed, we should choose the first representation in the case $\beta=\alpha/2$,
 and the second kind representation in the case $\beta=1$.
%in which $H_{1}(t)$ has pdf $(g_{2-\frac{\alpha}{2}}(t))^{-1}(g_{1-\frac{\alpha}{2}}\ast f)(t,y)$.
%And we also have
%$$
%{\bf E} \int_0^{x+H_{1}(t)} \psi(y)dy- \int_0^\infty \int_{0}^{x} \psi(z)dz (g_{1-\frac{\alpha}{2}}\ast f)(t,y)dy=\int_0^t{\bf E} [\psi(x+Y_{\alpha/2}(s))]ds
%$$
%and
%$$
%{\bf E} \int_{x-H_{1}(t)}^0 \psi(y)dy+ \int_0^\infty \int_{0}^{x} \psi(z)dz (g_{1-\frac{\alpha}{2}}\ast f)(t,y)dy=\int_0^t{\bf E} [\psi(x-Y_{\alpha/2}(s))]ds
%$$

%{\bf I think it is wise to compute the numerical solution for  the fundamental solutions with their positive and negative
%components, then picture them as comparation, but I'm unable to do that so far. I'll work on it. If I
%can't work out until submission, I will delete this example. }
%{\bf (to be continued, CGL) It is necessary to give some intricate interpretation for their connection and difference from the point of physics and probability.}

\end{exm}

We will study the d'Alembert solution to differential-difference equations in the next example.

\begin{exm}
Consider the first-order backward difference operator
\begin{equation}
A\phi(x)=\phi(x)-\phi(x-1), \quad\phi\in C_0(\mathbb{R}),\,x\in \mathbb{R}.
\end{equation}
It is clear that $A$ is a bounded operator with $\|A\|=2$, and $A$ generates a
bounded $C_0$-semigroup $e^{tA}$ which is represented as $e^{tA}\phi(x)=e^t\sum_{k=0}^{\infty} \frac{(-t)^k\phi(x-k)}{k!}$.
Therefore, $A$ generates an $\alpha/2$-times resolvent family $S_{\alpha/2}^+(t)$ by the subordination principle:
\begin{equation}
S_{\alpha/2}^+(t) \phi(x)=\int_0^\infty \varphi_{\alpha/2}(t,s)e^{sA}\phi(x)ds, \quad  0<\alpha\leq2
\end{equation}
where $\varphi_{\alpha/2}(t,\cdot)$ is given by (\ref{stablepdf}).
Analogously, the operator $(-A)$ generates a
bounded $C_0$-semigroup $e^{-tA}\phi(x)=e^{-t}\sum_{k=0}^{\infty} \frac{t^k\phi(x-k)}{k!}$ and
$\alpha/2$-times resolvent family
\begin{equation}
S_{\alpha/2}^-(t)\phi(x)=\int_0^\infty \varphi_{\alpha/2}(t,s)e^{-sA}\phi(x)ds.
\end{equation}
In other words, $S_{\alpha/2}^+(t)\phi(x)$ and $S_{\alpha/2}^-(t)\phi(x)$ satisfy the equations
\begin{equation}
\begin{cases}
D_t^{\alpha/2}u^+(t,x) = A u^+(t,x), \quad t>0, \quad x\in\Real \\
u^+(0,x)= \phi(x)
\end{cases}
\end{equation}
and
\begin{equation}
\begin{cases}
D_t^{\alpha/2}u^-(t,x) = -A u^-(t,x), \quad t>0, \quad x\in\Real \\
u^-(0,x)= \phi(x)
\end{cases}
\end{equation}
respectively.

It is easy to see $A^2 \phi(x)=\phi(x)-2\phi(x-1)+\phi(x-2)$. Next we introduce the following wave-type differential-difference equation
\begin{equation}\label{wave-type differential-difference equation}
\begin{cases}
D_t^{\alpha}u(t,x) = A^2 u(t,x), \quad t>0, \quad x\in\Real \\
u(0,x)= \phi(x), \quad u_x(0,x)=0
\end{cases}
\end{equation}
in which $1<\alpha\leq2$. By  Theorem \ref{thmab} the solution of equation (\ref{wave-type differential-difference equation}) can be represented as
\begin{equation}
u(t,x)=\frac12[S_{\alpha/2}^+(t)\phi(x)+S_{\alpha/2}^-(t)\phi(x)].
\end{equation}

\end{exm}

Next we consider the discrete version of Example 3.4, which is related to the time-fractional
Poisson process \cite{Laskin2003, meerschaert2011, Orsinger2013}.
\begin{exm}

For $1<\alpha\leq2$, consider a wave-type differential-difference equation on the integers:
\begin{equation}\label{wave-type differential-difference equation on integer}
\begin{cases}
D_t^{\alpha/2}D_t^{\alpha/2}p(t,k) = p(t,k)-2p(t,k-1)+p(t,k-2), \quad t>0, \quad k\in \mathbb{Z} \\
p(0,0)=1\\
p(0,k)=0, \quad k\neq 0\\
D_t^{\alpha/2}p(0,k)=0.
\end{cases}
\end{equation}

%Define a sequence space
%\begin{equation} \label{solution for wave-type FPP}
%c_0:=\Big\{a(\cdot)\Big|a(k)\geq 0,\sum_{k=-\infty}^{\infty}a(k)=1, k\in \mathbb{Z}\Big\}.
%\end{equation}

%{\bf I don't understand here what space we choose. The space $c_0$ is not a vector space!}

The solution for (\ref{wave-type differential-difference equation on integer}) can be
represented as
\begin{equation}\label{solution for wave-type FPP}
p_\alpha(t,k)=\frac12(p^+_{\alpha/2}(t,k)+p^-_{\alpha/2}(t,k)),
\end{equation}
in which $p^+_{\alpha/2}(t,k)$ and $p^-_{\alpha/2}(t,k)$ are the solutions for
\begin{equation}\label{differential-difference equation for negative FPP }
\begin{cases}
D_t^{\alpha/2}p^+_{\alpha/2}(t,k) = p^+_{\alpha/2}(t,k)-p^+_{\alpha/2}(t,k-1), \quad t>0, \quad k\in \mathbb{Z} \\
p^+_{\alpha/2}(0,0)=1\\
p^+_{\alpha/2}(0,k)=0, \quad k\neq 0
\end{cases}
\end{equation}
and
\begin{equation}\label{differential-difference equation for positive FPP }
\begin{cases}
D_t^{\alpha/2}p^-_{\alpha/2}(t,k) = -\big[p^-_{\alpha/2}(t,k)-p^-_{\alpha/2}(t,k-1)\big], \quad t>0, \quad k\in \mathbb{Z} \\
p^-_{\alpha/2}(0,0)=1\\
p^-_{\alpha/2}(0,k)=0, \quad k\neq 0
\end{cases}
\end{equation}
respectively.

If $k$ is restricted to  non-negative integers, then equation (\ref{differential-difference equation for positive FPP }) governs the time fractional Poisson process,
see \cite{Laskin2003, meerschaert2011, Orsinger2013} and references therein.
The time fractional Poisson process governed by (\ref{differential-difference equation for positive FPP })
is a non-decreasing counting process $N^-(t)$, and $p^-_{\alpha/2}(t,k)$ is the probability
$p^-_{\alpha/2}(t,k)=P(N^-(t)=k)$. Similarly, we can regard equation (\ref{differential-difference equation for negative FPP })
as the governing equation for the (negative) counting process $N^+(t)$ which is a non-increasing process taking values
in the non-positive integers.

Therefore, the d'Alembert solution (\ref{solution for wave-type FPP})
indicates that the stochastic process $N(t)$ governed by (\ref{wave-type differential-difference equation on integer}) is a sum of the
(positive and negative) time fractional Poisson processes $N^+(t)$ and $N^-(t)$, and we may consider $N(t)$ as a wave-type fractional Poisson process.

\end{exm}

The d'Alembert formula solution to evolutionary differential equations can also be
realized in higher dimensions, as we will illustrate in the next example.
The decomposition is related to the fractional Schr\"odinger equation.

\begin{exm}
Consider the following time fractional diffusion-wave equation
\begin{equation}\label{fractional wave-equation in n dimension}
\begin{cases}
D_t^\alpha u(t,x) = \Delta u(t,x), \quad t>0, \,x \in \Real^n,\\
u(0,x)= \phi(x), \quad u_t(0,x)= 0,\\
\lim\limits_{|x|\rightarrow\infty}u(t,x)=0
\end{cases}
\end{equation}
where $1<\alpha\leq 2$ and $\phi\in L^2({\mathbb R}^n)$. For simplicity, we will only study the mild solution on $L^2({\mathbb R}^n)$.

Since $\Delta$ generates cosine function $C(t)$ on $L^2({\mathbb R}^n)$ (\cite{ABHN}),
it also generates an $\alpha$-times fractional resolvent family $S_\alpha (t)$ on $L^2({\mathbb R}^n)$ by subordination principle and
$$
S_\alpha(t) = \int_0^\infty \varphi_{\alpha/2}(t,s) C(s) ds,\quad\quad 1<\alpha\leq2
$$
where $\varphi_{\alpha/2}(t,\cdot)$ is given by (\ref{stablepdf}).
Then the mild solution to (\ref{fractional wave-equation in n dimension}) can be represented as
\begin{equation*}
u(t,x)=S_\alpha(t)\phi(x).
\end{equation*}
Especially, $u(t,x)=C(t)\phi(x)$ when $\alpha=2$.

Define $-(-\Delta)^{\alpha/2}$ as the usual Riesz fractional Laplacian operator, i.e.,
\begin{equation*}
-(-\Delta)^{\alpha/2}f(x)=(2\pi)^{-n}\int_{{\mathbb R}^n}e^{i\xi\cdot x}|\xi|^\alpha\hat f(\xi)d\xi,
\end{equation*}
where $\hat f(\xi)=\int_{{\mathbb R}^n}e^{-i\xi\cdot x}f(x)dx$.

Let $A=i[-(-\Delta)^{1/2}]$ and $-A=-i[-(-\Delta)^{1/2}]$.
Based on the Theorem 3.16.7 in \cite{ABHN}, we obtain the following conclusion:
$(\pm A)^2=\Delta$,  $A$ and $-A$ generate $C_0$-group $U(t)$ and $U(-t)$ on $L^2({\mathbb R}^n)$ respectively, and the group reduction formula
$$
C(t)=\frac12(U(t)+U(-t)).
$$
Therefore the mild solution to (\ref{fractional wave-equation in n dimension}) can also be represented as
\begin{equation}\label{fractional dalembert formula in n dimension}
u(t,x)=\frac12\big(\int_0^\infty \varphi_{\alpha/2}(t,s)U(s)\phi(x)ds+\int_0^\infty \varphi_{\alpha/2}(t,s)U(-s)\phi(x)ds\big)
\end{equation}
which can be regarded as the d'Alembert formula solution in higher dimensions.
Here $\int_0^\infty \varphi_{\alpha/2}(t,s)U(s)\phi(x)ds$ and $\int_0^\infty \varphi_{\alpha/2}(t,s)U(-s)\phi(x)ds$ are solutions to
\begin{equation}\label{fractional u+ in n dimension}
\begin{cases}
D_t^{\alpha/2}u^+(t,x) = A u^+(t,x)=i[-(-\Delta)^{1/2}]u^+(t,x), \quad t>0, \,x \in \Real^n\\
u^+(0,x)= \phi(x)
\end{cases}
\end{equation}
and
\begin{equation}\label{fractional u- in n dimension}
\begin{cases}
D_t^{\alpha/2}u^-(t,x) = -A u^-(t,x)=-i[-(-\Delta)^{1/2}]u^-(t,x), \quad t>0, \,x \in \Real^n\\
u^-(0,x)= \phi(x)
\end{cases}
\end{equation}
respectively.
%It is known that time reversibility for Schr\"odinger equation and spatial-fractional Schr\"odinger equation
%does make sense, so we can regard (\ref{u+ in n dimension}) as another free spatial-fractional Schr\"odinger equation.

In the following we shall consider the special case $\alpha=2$.
When $\alpha=2$, equation (\ref{fractional u- in n dimension}) is a special case of the free space-fractional Schr\"odinger equation
\cite{Bayin,Jeng,Laskin2002}, and (\ref{fractional u+ in n dimension}) is the time reverse of (\ref{fractional u- in n dimension}).
By taking Fourier transforms, we get that the fundamental solutions to (\ref{fractional u+ in n dimension}) and (\ref{fractional u- in n dimension}) can be represented as
\begin{equation*}
u_F^+(t,x)=(2\pi)^{-n}\int_{{\mathbb R}^n}e^{i\xi\cdot x}e^{-i|\xi|t}d\xi
\end{equation*}
and
\begin{equation*}
u_F^-(t,x)=(2\pi)^{-n}\int_{{\mathbb R}^n}e^{i\xi\cdot x}e^{i|\xi|t}d\xi
\end{equation*}
respectively.
%{\bf (\cite{GuoXu} indicates that  $u_F^-(t,x)=\frac{1}{|x|}H^{1,1}_{2,2}\big[(it)^{-1}|x| \mid^{(1,1),(1,1/2)}_{(1,1),(1,1/2)} \big]$, But I don't
%like it.)}

We have not been able to  obtain an explicit closed expression for $u_F^+(t,x)$ or $u_F^-(t,x)$ in the general $n$-dimensional case even in the sense of
distributions. However, we can compute their sum in  one dimension and reduce it to the classical d'Alembert formula for the wave equation.
More precisely, when $n=1$, we have
\begin{eqnarray*}
&&u_F^+(t,x)+u_F^-(t,x)\\
&=&\frac{1}{2\pi}\int_{-\infty}^{+\infty}e^{i\xi x}e^{-i|\xi|t}d\xi+\frac{1}{2\pi}\int_{-\infty}^{+\infty}e^{i\xi x}e^{i|\xi|t}d\xi\\
&=&\frac{1}{2\pi}\int_0^{+\infty}e^{i\xi x}e^{-i\xi t}d\xi+\frac{1}{2\pi}\int_{-\infty}^0e^{i\xi x}e^{i\xi t}d\xi \\
&&+\frac{1}{2\pi}\int_0^{+\infty}e^{i\xi x}e^{i\xi t}d\xi+\frac{1}{2\pi}\int_{-\infty}^0e^{i\xi x}e^{-i\xi t}d\xi \\
&=&\frac{1}{2\pi}\int_{-\infty}^{+\infty}e^{i\xi x}e^{-i\xi t}d\xi+\frac{1}{2\pi}\int_{-\infty}^{+\infty}e^{i\xi x}e^{i\xi t}d\xi\\
&=&\delta(x-t)+\delta(x+t).
\end{eqnarray*}
Therefore the solution (\ref{fractional dalembert formula in n dimension}) to (\ref{fractional wave-equation in n dimension}) is
\begin{equation*}
u(t,x)=\frac12(U(t)\phi(x)+U(-t)\phi(x))=\frac12(\phi(x-t)+\phi(x+t)),
\end{equation*}
which is the same as the classical d'Alembert solution.
\end{exm}

\section{D'Alembert formula for fractional telegraph equations}

It was obtained by Kac \cite{Kac,Kac1974} that for the telegraph equation
\begin{equation}\label{telegraph equation}
\begin{cases}
   \frac{\partial^2 u(t,x)}{\partial t^2} +2h\frac{\partial u(t,x)}{\partial t}= c^2 \frac{\partial ^2 u(t, x)}{\partial x^2}, \quad \quad \quad t>0; \\
   u(0, x)=\phi(x),\quad \quad u_t'(0,x) =0.
\end{cases}
\end{equation}
the solution is given by
\begin{equation}\label{solution to telegraph equation}
u(t,x) = \frac{1}{2} \mathbf{E}[\phi(x + c\xi_h(t) )+ \phi(x - c\xi_h(t) )],
\end{equation}
where
$$
 \xi_h(t) = \int_0^t (-1)^{N_h(s)}ds , \quad t \ge 0
$$
and $N_h(t)$ is a Poisson process with parameter $h$, that is, $\mathbb{P}\{N_h(t) = k\} = \frac{h^k t^k}{k!}e^{-ht}$.
When $h=0$, we have $N_h(t)\equiv 0$ and $\xi_h(t)=t$, then (\ref{telegraph equation}) and (\ref{solution to telegraph equation})
are reduced to the classical wave equation and d'Alembert solution respectively. Therefore, (\ref{solution to telegraph equation})
can be regarded as the d'Alembert formula solution for the telegraph equation.

We may also give the d'Alembert's formula solution for the (sequential Caputo) fractional telegraph equation  with proper initial conditions.
Let $1<\alpha\leq 2$, consider the following fractional telegraph equation:
\begin{equation}\label{fractional telegraph equation}
\begin{cases}
   D_t^{\alpha/2}D_t^{\alpha/2}u(t)+2hD_t^{\alpha/2}u(t)=Au(t) \quad \quad \quad t>0; \\
   u(0)=\phi,\quad \quad D_t^{\alpha/2} u (0)=\psi.
\end{cases}
\end{equation}

Formally, the equation (\ref{fractional telegraph equation}) can be rewritten as
\begin{eqnarray*}
\begin{cases}
(D^{\alpha/2}_t+h)^2u(t)=(A+h^2)u(t)=B^2 u(t)\\
u(0)=\phi,\quad \quad D_t^{\alpha/2} u (0)=\psi,
\end{cases}
\end{eqnarray*}
if there is an operator $B$ satisfying $A+h^2 = B^2$. Therefore, the operator   $D_t^{\alpha/2}D_t^{\alpha/2} + 2h D_t^{\alpha/2}-A$ can be decomposed into the product of
$D^{\alpha/2}_t+h+B$ and  $D^{\alpha/2}_t+h - B$. We will then study the relations between the solution of (\ref{fractional telegraph equation}) and the solutions of
\begin{equation}\label{fractional telegraph +}
\begin{cases}
D^{\alpha/2}_tu^+_{\alpha/2}(t)=(B-h)u^+_{\alpha/2}(t),\\
u^+_{\alpha/2}(0)=\phi+B^{-1}(\psi+h\phi)
\end{cases}
\end{equation}
and
\begin{equation}\label{fractional telegraph -}
\begin{cases}
D^{\alpha/2}_tu^-_{\alpha/2}(t)=(-B-h)u^-_{\alpha/2}(t),\\
u^-_{\alpha/2}(0)=\phi-B^{-1}(\psi+h\phi)
\end{cases}
\end{equation}
when $\psi+h\phi$ is in the range of $B$.

%Next we will try to give the solutions to equations (\ref{fractional telegraph +}) and (\ref{fractional telegraph -}). This is possible if
%both $B-h$ and $B+h$ generate $\alpha/2$-times resolvent families.
%In the next result the (non-standard) d'Alembert solution for equation (\ref{fractional telegraph equation}) is constructed via fractional resolvent families.

\begin{thm}\label{thmFTE}
Suppose that $B$ is an operator satisfying $B^2 = A+ h^2$, and both $B-h$ and $-B-h$ generate $\alpha/2$ times resolvent families
$S^+_{\alpha/2}(t)$ and $S^-_{\alpha/2}(t)$ on $X$, respectively.
Then for $\phi \in D(A)$, and $\psi+ h\phi = B f $ for some $f \in D(A^2)$ the strong solutions of (\ref{fractional telegraph +}) and (\ref{fractional telegraph -})
can be expressed as
\begin{equation}
u^+_{\alpha/2}(t)=S^+_{\alpha/2}(t)\phi+ S^+_{\alpha/2}(t)B^{-1}(\psi+h\phi)
\end{equation}
and
\begin{equation}
u^-_{\alpha/2}(t)=S^-_{\alpha/2}(t)\phi- S^-_{\alpha/2}(t)B^{-1}(\psi+h\phi)
\end{equation}
respectively, and the d'Alembert's formula solution for (\ref{fractional telegraph equation}) reads:
\begin{equation}\label{erheu}
u_{\alpha}(t)=\frac{1}{2}[S^+_{\alpha/2}(t)\phi+S^-_{\alpha/2}(t)\phi]
              +\frac{1}{2}[S^+_{\alpha/2}(t)B^{-1}(\psi+h\phi)- S^-_{\alpha/2}(t)B^{-1}(\psi+h\phi)].
\end{equation}
\end{thm}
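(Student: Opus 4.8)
The plan is to exploit the formal factorization
\[
\Bigl(D_t^{\alpha/2}+h\Bigr)^2 - B^2 = \Bigl(D_t^{\alpha/2}+h+B\Bigr)\Bigl(D_t^{\alpha/2}+h-B\Bigr),
\]
which, together with $B^2=A+h^2$, turns (\ref{fractional telegraph equation}) into the product of the two first-order (in the $\alpha/2$ sense) operators appearing in (\ref{fractional telegraph +}) and (\ref{fractional telegraph -}). Throughout I would use $B^2=A+h^2$, so that $D(B^2)=D(A)$ and $D(B^4)=D(A^2)$, and I would interpret $B^{-1}(\psi+h\phi)$ as the element $f\in D(A^2)$ supplied by the range hypothesis $\psi+h\phi=Bf$.

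First I would record the representation of $u^\pm_{\alpha/2}$. By hypothesis $B-h$ and $-B-h$ generate the $\alpha/2$-times resolvent families $S^+_{\alpha/2}$ and $S^-_{\alpha/2}$, so by the solution theory recalled around (\ref{ugalphaA})--(\ref{alphaequation}) the unique strong solution of $D_t^{\alpha/2}u=Cu$, $u(0)=y\in D(C)$, is $u(t)=S_{\alpha/2}(t)y$. Applying this with $C=B-h$, $y=\phi+f$ and with $C=-B-h$, $y=\phi-f$, and using linearity of $S^\pm_{\alpha/2}(t)$, yields the two stated formulas. The initial data indeed lie in the correct domain: $\phi\in D(A)=D(B^2)\subset D(B)$ and $f\in D(A^2)=D(B^4)\subset D(B)$, so $\phi\pm f\in D(B)=D(\pm B-h)$.

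Next I would set $u_\alpha:=\tfrac12\bigl(u^+_{\alpha/2}+u^-_{\alpha/2}\bigr)$; regrouping the four resulting terms gives exactly (\ref{erheu}). The initial conditions follow by direct evaluation: $u_\alpha(0)=\tfrac12[(\phi+f)+(\phi-f)]=\phi$, while from $D_t^{\alpha/2}u^\pm_{\alpha/2}(0)=(\pm B-h)u^\pm_{\alpha/2}(0)$ and $Bf=\psi+h\phi$ a one-line computation gives $D_t^{\alpha/2}u_\alpha(0)=\tfrac12[(B\phi+\psi-hf)+(-B\phi+\psi+hf)]=\psi$. Finally, since $D_t^{\alpha/2}u^+_{\alpha/2}=(B-h)u^+_{\alpha/2}$ and $B-h$ commutes with $S^+_{\alpha/2}$ by property (2) in the definition of a resolvent family, a second fractional differentiation yields $D_t^{\alpha/2}D_t^{\alpha/2}u^+_{\alpha/2}=(B-h)^2u^+_{\alpha/2}$, whence
\begin{align*}
D_t^{\alpha/2}D_t^{\alpha/2}u^+_{\alpha/2}+2hD_t^{\alpha/2}u^+_{\alpha/2}
&=\bigl[(B-h)^2+2h(B-h)\bigr]u^+_{\alpha/2}\\
&=(B-h)(B+h)u^+_{\alpha/2}=(B^2-h^2)u^+_{\alpha/2}=Au^+_{\alpha/2}.
\end{align*}
The identical computation with $B$ replaced by $-B$ handles $u^-_{\alpha/2}$, and averaging shows that $u_\alpha$ solves (\ref{fractional telegraph equation}).

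The main obstacle is not the algebra but the functional-analytic bookkeeping. I must ensure that the iterated Caputo derivative $D_t^{\alpha/2}D_t^{\alpha/2}u^\pm_{\alpha/2}$ is legitimate and that $u^\pm_{\alpha/2}(t)\in D(A)=D(B^2)$ for every $t$, so that $Au^\pm_{\alpha/2}$ is meaningful. This is precisely where the strength of the hypotheses enters: since $\phi\pm f\in D(B^2)$, property (2) (applied twice) both keeps $u^\pm_{\alpha/2}(t)$ in $D(B^2)$ and allows one to differentiate $D_t^{\alpha/2}u^\pm_{\alpha/2}=(\pm B-h)u^\pm_{\alpha/2}=S^\pm_{\alpha/2}(t)(\pm B-h)(\phi\pm f)$ a second time, producing $(\pm B-h)^2u^\pm_{\alpha/2}$. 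A secondary point to pin down is the commutativity of $B$ with $D_t^{\alpha/2}$ and with the two resolvent families, which again reduces to property (2), together with the well-definedness of $B^{-1}(\psi+h\phi)$, handled by the range condition $\psi+h\phi=Bf$.
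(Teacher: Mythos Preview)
Your proposal is correct and follows essentially the same approach as the paper: both verify directly that $u_\alpha=\tfrac12(u^+_{\alpha/2}+u^-_{\alpha/2})$ satisfies the equation and initial conditions by exploiting $(D_t^{\alpha/2}+h)u^\pm_{\alpha/2}=\pm B\,u^\pm_{\alpha/2}$ together with the commutation property (2) of resolvent families. The only cosmetic difference is that the paper organizes the verification as $(D_t^{\alpha/2}+h)^2u_\alpha=B^2u_\alpha$ on the average, whereas you show each $u^\pm_{\alpha/2}$ separately satisfies the telegraph equation via $(B\mp h)^2+2h(\pm B-h)=B^2-h^2=A$; your more explicit domain bookkeeping ($\phi\pm f\in D(B^2)$) is a welcome addition that the paper leaves implicit.
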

\begin{proof}
First note that both $\phi$ and $B^{-1}(\psi + h\phi)$ are in the domain of $B^2$. Let
$
u_{\alpha}(t)=\frac12(u^+_{\alpha/2}(t)+u^-_{\alpha/2}(t)).
$
Then we have
\begin{eqnarray*}
&&(D^{\alpha/2}_t+h)^2u_{\alpha}(t)\\
&=&\frac12(D^{\alpha/2}_t+h)(D^{\alpha/2}_t+h)(u^+_{\alpha/2}(t)+u^-_{\alpha/2}(t)) \\
&=&\frac12(D^{\alpha/2}_t+h)(D^{\alpha/2}_t+h)u^+_{\alpha/2}(t)
   +\frac12(D^{\alpha/2}_t+h)(D^{\alpha/2}_t+h)u^-_{\alpha/2}(t) \\
&=&\frac12(D^{\alpha/2}_t+h)Bu^+_{\alpha/2}(t)
   -\frac12(D^{\alpha/2}_t+h)Bu^-_{\alpha/2}(t) \\
&=&\frac12B(D^{\alpha/2}_t+h)u^+_{\alpha/2}(t)
   -\frac12B(D^{\alpha/2}_t+h)u^-_{\alpha/2}(t) \\
&=&\frac12B^2(u^+_{\alpha/2}(t)+u^-_{\alpha/2}(t)) \\
&=&B^2u_{\alpha}(t).
\end{eqnarray*}
Moreover,
\begin{eqnarray*}
u_{\alpha}(0)=\frac12[u^+_{\alpha/2}(0)+u^-_{\alpha/2}(0)]
             =\frac12[\phi+B^{-1}(\psi+h\phi)+\phi-B^{-1}(\psi+h\phi)]
             =\phi.
\end{eqnarray*}
Finally, we have
\begin{eqnarray*}
2\lim\limits_{t\rightarrow0}(D_t^{\alpha/2}u_{\alpha})(t)
&=&\lim\limits_{t\rightarrow0}(D_t^{\alpha/2}u^+_{\alpha/2})(t)+\lim\limits_{t\rightarrow0}(D_t^{\alpha/2}u^-_{\alpha/2})(t)\\
&=&Bu^+_{\alpha/2}(0)-hu^+_{\alpha/2}(0)-Bu^-_{\alpha/2}(0)-hu^-_{\alpha/2}(0)\\
&=&B[B^{-1}(\psi+h\phi)+B^{-1}(\psi+h\phi)]-h(u^+_{\alpha/2}(0)+u^-_{\alpha/2}(0))\\
&=&2\psi+2h\phi-2h\phi
=2\psi.
\end{eqnarray*}
Thus (\ref{erheu}) is the strong solution of (\ref{fractional telegraph equation}).
\end{proof}

\begin{rem}\label{rem4.2}
(1) When $h=0$, Theorem \ref{thmFTE} reduces to Theorem \ref{abFujita}.

(2) Suppose that $-(\omega_0 +A)$ is a sectorial operator with angle $0<\varphi < \pi - \pi \alpha $
for some constant $\omega_0 >0$. Then $h^2 +A$
generates a bounded analytic $\alpha$-times resolvent family
for each constant $h$ satisfying $|h| \le \sqrt{\omega_0}$.
Let $B =i[-(h^2+A)]^{1/2}$, then $(\pm B)^2=A+h^2$, $B$ is invertible, and $\pm B$ generate bounded analytic $\alpha/2$-times resolvent families.
Therefore, both $B-h$ and $-B-h$ generate exponentially bounded $\alpha/2$-times resolvent family.
See \cite[Section 7]{lichenggang2012} for more details.
\end{rem}

\begin{exm}
Let $1<\alpha\leq 2$. Consider
\begin{equation}\label{new concrete fractional telegraph equation}
\begin{cases}
   D_t^{\alpha/2}D_t^{\alpha/2}u(t,x)+2hD_t^{\alpha/2}u(t,x)=c^2\frac{\partial^2u(t,x)}{\partial x^2} \quad \quad \quad t>0,\quad x\in\mathbb{R}; \\
   u(0,x)=\phi(x),\quad \quad D_t^{\alpha/2} u (0,x)=0.
\end{cases}
\end{equation}
which is equivalent to
\begin{equation}\label{old concrete fractional telegraph equation}
\begin{cases}
   D_t^{\alpha}u(t,x)+2hD_t^{\alpha/2}u(t,x)=c^2\frac{\partial^2u(t,x)}{\partial x^2} \quad \quad \quad t>0,\quad x\in\mathbb{R}; \\
   u(0,x)=\phi(x),\quad \quad  u_t(0,x)=0.
\end{cases}
\end{equation}

Denote by $A=c^2\frac{d^2}{dx^2}$. Then the conditions in Theorem \ref{thmFTE} and Remark \ref{rem4.2} are fulfilled.
Therefore the solution to (\ref{new concrete fractional telegraph equation}) can be represented as
\begin{equation*}
u(t,x)=\frac{1}{2}[S^+_{\alpha/2}(t)(1+hB^{-1}) \phi(x)+S^-_{\alpha/2}(t)(1-hB^{-1}) \phi(x)]
\end{equation*}
%By taking Laplace transform, we have
%\begin{equation*}
%\hat u(\lambda,x)=(\lambda^{\alpha-1}+2h\lambda^{\alpha/2-1})(\lambda^{\alpha}+2h\lambda^{\alpha/2}-A)^{-1}\phi(x).
%\end{equation*}
%Moreover, taking Fourier transform leads to
%\begin{equation*}
%\tilde{\hat u}(\lambda, \xi)=\int_{-\infty}^{\infty} e^{i\xi x}\hat u(\lambda,x)dx=\frac{\lambda^{\alpha-1}+2h\lambda^{\alpha/2-1}}{\lambda^{\alpha}+2h\lambda^{\alpha/2}+c^2\xi^2}\phi(\xi)
%\end{equation*}
%which is the formula (2.6) in \cite{Orsinger2004}. By putting inverse Laplace transform
%on $\tilde{\hat u}(\lambda, \xi)$,
Orsingher and Beghin (\cite{Orsinger2004}) have obtained the following Fourier transform
of the solution to (\ref{old concrete fractional telegraph equation}):
\begin{eqnarray*}
\tilde{u}(t, \xi)&=&E_{\alpha,1}(\eta_1 t^\alpha)+\frac{(2h+\eta_2)t^\alpha}{\eta_1-\eta_2}
                   [\eta_1E_{\alpha,\alpha+1}(\eta_1 t^\alpha)-\eta_2E_{\alpha,\alpha+1}(\eta_2 t^\alpha)]\phi(\xi)\\
  &=&\frac12 \big[ \big(1+\frac{h}{\sqrt{h^2-c^2\xi^2}}\big)E_{\alpha,1}(\eta_1 t^\alpha)
     + \big(1-\frac{h}{\sqrt{h^2-c^2\xi^2}}\big)E_{\alpha,1}(\eta_2 t^\alpha)\big]\phi(\xi)
\end{eqnarray*}
where
$$
\eta_1=-h+\sqrt{h^2-c^2\xi^2},\quad\quad \eta_2=-h-\sqrt{h^2-c^2\xi^2}.
$$
They also remarked that, for $\alpha=2$, $\tilde{u}(t, \xi)$
can be reduced to the characteristic function of the telegraph process
$$
T(t)=V(0)\xi_h(t) =V(0) \int_0^t (-1)^{N_h(s)}ds,
$$
where $V(0)$ is a random variable  with the probability distribution
$P(V(0)=\pm c)=1/2$ independent of $N_h(t)$.

On the other hand, we can obtain the d'Alembert formula solution for (\ref{new concrete fractional telegraph equation})
by Bernstein and subordinator theory \cite{Applebaum2009, Schilling2012}. Indeed, since  $z(\lambda)=(\lambda^{\alpha}+2h\lambda^{\alpha/2})^{1/2}$
is a Bernstein function for $0<\alpha\leq2,\, h\geq0$, there is a subordinator $D_z(t)$ with Laplace exponent $z(\lambda)$, that is to say,
\begin{equation*}
\mathbb{E}\big(e^{-\lambda D_z(t)}\big)=e^{-t z(\lambda)}.
\end{equation*}
Denote the inverse of $D_z(t)$ by $Y_z(t)$,i.e.,$Y_z(t)=inf\{y>0:D_z(y)>t\}$.  Then we have
\begin{equation} \label{Laplace exponent}
 \int_0^\infty e^{-\lambda t}dF(t,x)=\frac{z(\lambda)}{\lambda}e^{-xz(\lambda)}, \quad x\geq0
\end{equation}
where $F(t,x)$ is the distribution function of $Y_z(t)$.

Thanks to the subordination principle and the d'Alembert's solution (\ref{dAlmbert}) to (\ref{wave-equation}),
the solution to (\ref{new concrete fractional telegraph equation}) can be represented as
\begin{equation} \label{LCG-subordination solution to FTE}
u(t,x)=\frac12\mathbb{E}[\phi(x+cY_z(t))+\phi(x-cY_z(t))]
\end{equation}

Next, we will clarify the the relation between (\ref{solution to telegraph equation}) and (\ref{LCG-subordination solution to FTE}) in the special case $\alpha=2$.
Let $z(\lambda)=\sqrt{\lambda^2+2h\lambda}$.
Denote the density of $\xi_h(t)$ by $g(t,x)$, $x\in(-\infty,\infty)$. Then $|\xi_h(t)|$ has density
$$
w(t,r)=\begin{cases} g(t,x)+g(t,-x), \quad x\geq 0,\\
0,\quad\quad\quad\quad\quad\quad\quad x<0.
\end{cases}
$$
Moreover, we have \cite{Dewitt1989}
\begin{eqnarray*}
 \int_0^\infty e^{-\lambda t}g(t,x)dt=
\begin{cases}
 \frac12[\frac{z(\lambda)}{\lambda}+1] e^{-xz(\lambda)}, \quad \quad   x>0,\\
 \frac12[\frac{z(\lambda)}{\lambda}-1] e^{xz(\lambda)}, \quad \quad  \quad x<0,\\
 \frac{z(\lambda)}{2\lambda}, \quad \quad \quad \quad \quad \quad \quad  \quad x=0,
\end{cases}
\end{eqnarray*}
and
\begin{equation}\label{Laplace exponent 2}
 \int_0^\infty e^{-\lambda t}w(t,x)dt=\frac{z(\lambda)}{\lambda}e^{-xz(\lambda)},\quad\quad x\geq 0.
\end{equation}
Since the solution of (\ref{telegraph equation}) can be represented as (\ref{solution to telegraph equation}):
\begin{eqnarray*}
u(t,x)&=&\frac12 \mathbb{E}[\phi(x+c\xi_h(t))+\phi(x-c\xi_h(t))] \\
      &=&\int_{-\infty}^{\infty}\frac12 [\phi(x+r)+\phi(x-r)]g(t,r)dr ,
\end{eqnarray*}
the solution of (\ref{telegraph equation}) can be represented in another way:
\begin{eqnarray*}
u(t,x)&=&\frac12 \mathbb{E}[\phi(x+c|\xi_h(t)|)+\phi(x-c|\xi_h(t)|)] \\
      &=&\int_{0}^{\infty}\frac12 [\phi(x+r)+\phi(x-r)]w(t,r)dr .
\end{eqnarray*}

It is easy to see $|\xi_h(t)|$ and the inverse subordinator $Y_z(t)$ are identically distributed from (\ref{Laplace exponent})
and (\ref{Laplace exponent 2}).
\end{exm}
%\begin{rem}
%Formally, the operator $B-h=\sqrt{A^2+h^2}-h$. Taking  $A^2$ as $-\Delta$, let $h=mc^2$, then $B-h$
%is the relativistic Schr\"odinger operator \cite{carmona1990}, where $m$ is the rest mass and $c$ is the speed of light.
%\end{rem}

\end{document}